\documentclass[reqno]{amsart}
\usepackage{amssymb,amsthm,amsfonts,amstext}
\usepackage{amsmath}
\usepackage[ansinew]{inputenc} 
\usepackage{newcent}       
\usepackage{helvet}         
\usepackage{courier}        
\makeatletter \@addtoreset{equation}{section} \makeatother
\usepackage{color}

\renewcommand\thefigure{\thesection.\@arabic\c@figure}
\renewcommand\thetable{\thesection.\@arabic\c@table}
\newtheorem{theorem}{Theorem}[section]

\newtheorem{proposition}[theorem]{Proposition}

\newtheorem{remark}[theorem]{Remark}

\newcommand{\mc}[1]{{\mathcal #1}}

\newcommand{\bb}[1]{{\mathbb #1}}
\newcommand{\<}{\langle}
\renewcommand{\>}{\rangle}
\newcommand{\Z}{\mathbb Z}
\newcommand{\E}{\mathbb E}
\newcommand{\R}{\mathbb R}
\newcommand{\N}{\mathbb N}
\renewcommand{\S}{\mathbb S}
\newcommand{\Y}{\mathcal{Y}}
\newcommand{\M}{\mathcal{M}}
\newcommand{\I}{\mathcal{I}}
\newcommand{\B}{\mathcal{B}}
\newcommand{\D}{\mathcal{D}}

\newcommand{\A}{\mathcal{A}}

\renewcommand{\P}{\mathbb P}

\newcommand{\W}{\mathcal{W}}
\newcommand{\K}{\mathcal{K}}
\newcommand{\HW}{\widetilde{H}}
\renewcommand{\d}{d}
\newcommand{\e}{\mathfrak{d}}

\begin{document}

\author{Sunder Sethuraman}
\address{Mathematics, University of Arizona, 617 N. Santa Rita Ave., Tucson, AZ 85721.} \email{sethuram@math.arizona.edu}

\title [Microscopic derivation of a fractional stochastic Burgers equation]{On microscopic derivation of a fractional stochastic Burgers equation}
\subjclass{60K35}

\keywords{fractional, fluctuation field, stochastic Burgers, zero-range, weakly, asymmetric, long range}

\begin{abstract}
We derive from a class of microscopic asymmetric interacting particle systems on $\Z$, with long range jump rates of order $|\cdot|^{-(1+\alpha)}$ for $0<\alpha<2$, different continuum fractional SPDEs.  More specifically, we show the equilibrium fluctuations of the hydrodynamics mass density field of zero-range processes, depending on the stucture of the asymmetry, and whether the field is translated with process characteristics velocity, is governed in various senses by types of fractional stochastic heat or Burgers equations.

The main result:  Suppose the jump rate is such that its symmetrization is long range but its (weak) asymmetry is nearest-neighbor.   Then, when $\alpha<3/2$, the fluctuation field in space-time scale $1/\alpha:1$, translated with process characteristic velocity, irrespective of the strength of the asymmetry, converges to a fractional stochastic heat equation, the limit also for the symmetric process.  However, when $\alpha\geq 3/2$ and the strength of the weak asymmetry is tuned in scale $1-3/2\alpha$, the associated limit points satisfy a martingale formulation of a fractional stochastic Burgers equation.

\end{abstract}

\maketitle

\section{Introduction}
\label{intro_section}

The purpose of this paper is to derive from a class of microscopic zero-range interacting particle systems on $\Z$, with asymmetric long range jump rates, certain continuum `fractional Burgers' and other stochastic partial differential equations (SPDE).  Our motivations are three fold:  

First, these results will be seen to complement recent work and conjectures in \cite{bgs} which infer certain `long range' KPZ class variance orders from the study of occupation times in asymmetric exclusion processes on $\Z$ with long range jump rates of order $|\cdot|^{-(1+\alpha)}$ for $\alpha>0$.

Second, given the interest in anomalous scales and previous work on deterministic fractional Burgers equations \cite{BFW}, \cite{KW}, \cite{Vazquez}, \cite{W_book}, it is a natural problem to try to understand the corresponding SPDEs.  

Third, although with respect to nearest-neighbor interacting systems on $\Z$, there has been much interest in KPZ Burgers equation which has been interpreted and understood in several ways (cf. \cite{ACQ}, \cite{Assing}, \cite{BG}, \cite{Funaki}, \cite{GJ}, \cite{gjs}, \cite{H}, \cite{Sasamoto-Spohn} and references therein), there seems to be little work in deriving such equations in the long range setting.  

We now expand on these motivations before discussing results.  

\subsection{Occupation times and KPZ class exponents}
\label{occ_subsection}
Consider the exclusion process on $\Z$ with single particle jump probability $p(x,y) = p(y-x)$.
In such a process, each particle jumps at rate $1$ and displaces according to $p$, except in that jumps to already occupied vertices are suppressed.  The configuration $\eta_t$ at time $t\geq 0$ is a collection of occupation numbers $\eta_t = \{\eta_t(x): x\in \Z\}$ where $\eta_t(x)$ is particle number at $x$ at time $t$.  The system is a Markov process with a family of invariant measures $\nu_\rho$, each indexed on configurations with asymptotic density $\rho\in [0,1]$; in fact, $\nu_\rho$ is a product of Bernoulli measures over the lattice points in $\Z^d$ \cite{Liggett}.

Suppose now $\rho = 1/2$ and the system is begun with distribution $\nu_{1/2}$.  It is known that the variance of the occupation time at the origin satisfies
$${\rm Var}\int_0^t \eta_s(x) - 1/2 ds \ \sim \ 2t\int_0^t \bar\P_{\nu_{1/2}}\big(R_s = 0\big)ds,$$
where $R_s$ is the position of a `second-class' particle initially at the origin.  Such a particle moves as a regular particle but also must exchange places when other regular particles decide to displace to its location.  

When $p$ is finite-range and with a drift $\sum_x xp(x)\neq 0$, it is known (cf. \cite{Balazs-Seppalainen1}, \cite{QV}) that
${\rm Var} R_t = O(t^{4/3})$.  Such second-class particle variances are known to connect to the variance of the height function for certain interfaces formed from the particle configuration \cite{Baik}.
Now, with a Gaussian ansatz, one posits decay $\P_{\nu_{1/2}}\big(R_s = 0) \sim \big({\rm Var} R_s\big)^{-1/2}$, which in particular would give ${\rm Var} \int_0^t \eta_s(0) - 1/2 ds \sim t^{4/3}$.  Although this type of local variance estimate has not been proved, superdiffusive lower bounds have been shown \cite{B}.  For the purposes of this article, we say KPZ class variance orders are those of the second-class particle (or the occupation time), as the correspondence with a height function is not obvious in the long range setting.  

Now, when $p$ is long range, that is $s(x) = \big(p(x) + p(-x)\big)/2 = O(|x|^{-(1+\alpha)})$ and $a(x) = \big(p(x) -p(-x)\big)/2$ is say supported on nearest-neighbor steps $a(\pm 1)\neq 0$, one can ask about the occupation time variance orders.  Surprisingly, in \cite{bgs}, it was shown, for several types of asymmetric jump probabilities including $p$, when $\alpha=3/2$, the variance is of order $O(t^{4/3})$.  Also for $\alpha>2$, when the jump law has more than $2$ moments, it was proved the variance is of the {\it same} order as that for the finite-range system with a jump probability with the same drift.  Then, it was conjectured (cf. Conjecture 2.17 in \cite{bgs}), given that the system is more volatile as $\alpha$ grows, that the variance should be of order $O(t^{4/3})$ for all $\alpha\geq 3/2$, a `long range' extension of the standard KPZ class variance orders.  

When $0<\alpha<3/2$, as shown in \cite{bgs} the variance has the same order as for the corresponding symmetric process with symmetrized jump probability $s$, which was computed to be $O(t^{2-1/\alpha})$ for $\alpha\geq 1$ and $O(t)$ for $0<\alpha<1$.  Therefore, in a sense, the long range KPZ class variance orders should match those of the finite-range class when $\alpha\geq 3/2$, and those of the symmetrized system when $\alpha<3/2$.  

These are in a sense `local' fluctuation results.  One can whether also the long range `bulk' fluctuations, that is those of the empirical density field, also follow such $\alpha$-dependent characterizations.  Given that the computations in \cite{bgs} were performed for the exclusion process, one can ask also whether the phenomenon extends to other mass-conservative interacting particle systems.

\subsection{Hydrodynamics and deterministic fractional Burgers equations}
\label{hyd_subsection}
For an array of weakly asymmetric nearest-neighbor exclusion processes on $\Z$, with jump probability $p(1) = 1/2 + c/n^{1/2}$ and $p(-1) = 1/2 - c/n^{1/2}$, it is well known that the diffusively scaled empirical density field, 
$$
\frac{1}{n}\sum_x \eta_{nt}(x) \delta_{x/n^{1/2}},$$
 when started from an initial measure with density profile $\sim \rho_0(x)$ and appropriate bounded entropy, converges weakly to the unique solution of the hydrodynamic equation
\begin{equation}
\label{nn_hyd}
\partial_t \rho \ = \ \frac{1}{2}\Delta \rho -2c\nabla \rho(1-\rho); \ \ \rho(0,x) = \rho_0(x).\end{equation}
See \cite{KL} for a complete account.

However, when $1\leq \alpha<2$, for long range weakly asymmetric processes, that is when $s(x) = O(|x|^{-(1+\alpha)})$ and $a$ is nearest-neighbor, $a(1) = c/n^{1-1/\alpha}$ and $a(-1) = -c/n^{1-1/\alpha}$, the long-range density field $(1/n^{1/\alpha}) \sum_x \eta_{nt}(x) \delta_{x/n^{1/\alpha}}$ formally converges to the solution of
$$\partial_t \rho \ = \ \Delta^{\alpha/2}\rho -2c\nabla \rho(1-\rho); \ \ \rho(0,x) = \rho_0(x).$$
Here $\Delta^{\alpha/2}$ is the fractional Laplacian, 
$$\Delta^{\alpha/2}G(x) = \frac{1}{2}\int_\R s(y) \left\{ G(x+y) - 2G(x) + G(x-y)\right\}dy.$$

When $0<\alpha<1$, no matter the order of the asymmetry $a(\pm 1)$, the density field converges to the solution of
$$\partial_t \rho \ = \ \Delta^{\alpha/2}\rho; \ \ \rho(0,x) = \rho_0(x).$$

When $\alpha>2$, under diffusive scaling and $a(\pm 1) = \pm cn^{-1/2}$, the density field tends to the solution of \eqref{nn_hyd}.  Also, when $\alpha = 2$, under `log' adjusted $a(\pm 1)= \pm c\log(n)/[(n\log n)^{1/2}]$ the field $(n\log n)^{-1/2}\sum_x \eta_{nt}(x)\delta_{x/(n\log n)^{1/2}}$ converges to the solution of \eqref{nn_hyd}. 

For different particle systems, such as zero-range processes (cf. Section \ref{notation-results}), which also have a family of invariant measures $\nu_\rho$ indexed by density, the formal long range hydrodynamic equations take form
\begin{equation}
\label{zr_hyd}
 \partial_t \rho \ = \ \Delta^{\alpha/2}\tilde{g}(\rho) -2c\nabla \tilde{g}(\rho)
\end{equation}
when $1\leq \alpha<2$ and
$\partial_t \rho = \Delta^{\alpha/2}\tilde{g}(\rho)$ when $0<\alpha<1$, and $\partial_t\rho = (1/2)\Delta\tilde{g}(\rho) -2c\nabla\tilde{g}(\rho)$ in diffusive and `log'-adjusted scales when $\alpha\geq 2$, in terms of a (nonlinear) `flux' function $\tilde{g}$.  

 See \cite{Jara_tagged_arxiv} in this context which addresses hydrodynamics, and also \cite{BFW},  \cite{Vazquez}, \cite{W_book} which consider uniqueness and regularity of related equations. 

It is natural to ask about the equilibrium fluctuations corresponding to these hydrodynamic limits.  In particular, starting from an invariant measure $\nu_\rho$, what are the limits of the fluctuation field $(1/n^{1/2\alpha})\sum_x \big(\eta_{nt}(x) -\rho\big) \delta_{x/n^{1/\alpha}}$, when say $0<\alpha<2$?  When the process is symmetric, that is $p=s$, such limits were considered in \cite{Jara_longrange} (cf. Proposition \ref{symmetric_prop}).  The general answer, well understood from a perturbative view and in many finite range examples, is that the fluctuation limit should be a linearization of the hydrodynamic equation, forced with a certain White noise \cite{BR}, \cite{Dittrich}, \cite{Ravishankar}, \cite{Spohn}.

\subsection{KPZ and stochastic Burgers equations}
\label{KPZ_subsection}
The KPZ equation, 
$$\partial_t h \ = \ a \Delta h + b \big(\nabla h\big)^2 + c \dot\W_t,$$
has stimulated much recent activity in the probability/math physics literature \cite{Corwin_review}.  Here, $h(t,x)$ represents the continuum height of certain interfaces with certain growth rules.  Part of the equation's mystique is that it is ill posed:  The noise is not regular enough to allow a strong solution, and the square nonlinearity prevents a weak formulation.  

Nevertheless, formally, the Cole-Hopf transform $Z(t,x) = e^{\lambda h(t,x)}$ with $\lambda = a/b$ satisfies the linear stochastic heat equation $\partial_t Z = a \Delta Z + (ac/b)Z \dot\W_t$ which is well-defined \cite{Walsh}.  One then declares $\log Z(t,x)$ as the `solution' to the KPZ equation.  In a recent tour-de-force \cite{H}, what actual equation $\log Z(t,x)$ satisfies and its relation to the KPZ equation was made precise.

From the microscopic point of view, the microscopic height function satisfies $h(t,x)-h(t,x+1) = \eta_t(x)$ where as before $\eta_t(x)$ is the particle number at $x$ at time $t$.  In nearest-neighbor exclusion processes, starting from $\nu_\rho$, with jump probability which is weakly asymmetric in that $a(\pm 1) = O(n^{-1/4})$, instead of $O(n^{-1/2})$ as in the last subsection, using a microscopic Cole-Hopf transform, it was shown that the diffusively scaled height fluctuations converge to $\log Z(t,x)$ \cite{BG}.  In \cite{ACQ}, different initial conditions are considered, as well as importantly `exact' statistics of the Cole-Hopf solution process.

Consider now the KPZ Burgers equation,
\begin{equation}
\label{KPZ Burgers}\partial_t u \ = \ a \Delta u + b \nabla u^2 + c \nabla \dot\W_t,\end{equation}
which formally governs the gradient $u = \nabla h$ of the KPZ equation solution.
Again, the equation is ill posed.  However, since $\eta_t(x)$ is the discrete gradient of the microscopic height function, to try to derive \eqref{KPZ Burgers}, it is natural to look at the fluctuation field which represents a microscopic form of $u$.

In \cite{GJ} and \cite{gjs}, in a class of systems starting from $\nu_\rho$, with nearest-neighbor weakly asymmetric jump probability so that $a(\pm 1)= O(n^{-1/4})$ as above,  it was shown that all limit points ${\mc Z}_t$ of the field,
$${\mc Z}^n_t\ = \ \frac{1}{n^{1/4}}\sum_x \tau_{\lfloor nvt\rfloor}(\eta_{nt}(x) - \rho\big)\delta_{x/n^{1/2}},$$
  in a moving frame with a characteristic speed $vnt$, satisfy a martingale formulation of \eqref{KPZ Burgers}.    Namely,
${\mc Z}_t(H) - {\mc Z}_0(H) - c_1\int_0^t {\mc Z}_s(\Delta H) ds - c_2\A_t(H)$ is a martingale corresponding to $c\nabla \dot\W_t$.  Here, the term is defined,
$$\A_t(H) \ = \ \lim_{\epsilon\downarrow 0} \int_0^t \int \nabla H(x) {\mc Z}_s(\tau_{x}G_{\epsilon})^2 ds,$$
where $G_\epsilon$ is a smoothing of the delta mass at $0$ and $\tau_y$ is shift by $y$.  The constants $c_1$ and $c_2$ are homogenized factors reflecting the density $\rho$ and the rates of particle interactions.  Although uniqueness of a limit process has not been shown for this type of martingale formulation, it does indicate structure corresponding to \eqref{KPZ Burgers}. 

In this context, what happens in long range systems when $s(x) = O(|x|^{-(1+\alpha)}$ and $a$ is nearest-neighbor of certain strength?  When $\alpha> 2$ and $a(\pm 1) = O(n^{1/4})$ or $\alpha=2$ and $a(\pm 1) = O((n\log n)^{-1/4})$, it is a straightforward computation, following \cite{gjs}, to see that the same sort of limit behaviors hold, with different constants, as in the nearest-neighbor setting.    

Part of our motitivation then is to ask, when $0<\alpha<2$ and $a(\pm 1)$ is of certain strength, if the limits of the fluctuation field `solve' a type of fractional KPZ Burgers equation,
\begin{equation}
\label{fractional KPZ Burgers}
\partial_t u  \ = \ a\Delta^{\alpha/2}u + b \nabla u^2 + c \nabla^{\alpha/2}\dot\W_t.
\end{equation}
We comment, although there does not seem to be a `Cole-Hopf' formula to analyze \eqref{fractional KPZ Burgers}, it would be of interest to understand the equation from the point of view of Hairer's rough paths approach \cite{H2}.  In this respect, it appears \eqref{fractional KPZ Burgers} formally can be made to make sense when $\alpha>3/2$ \cite{Jeremy}.

\subsection{Sketch of main results}
\label{survey_subsection}
After having described motivations, we now describe briefly our main results.  To introduce the main ideas and to be concrete, we concentrate in the article on zero-range processes (cf. definitions in Subsection \ref{notation-results}) with jump probability $p$ such that $s(x) = O(|x|^{-(1+\alpha)})$ and $a(\cdot)$ is nearest-neighbor with varying strengths, often depending on the scaling parameter $n$.  The zero-range process is a representative system:  In principle, all of the results in the article should hold in a more general setting as in \cite{gjs}.

Also, we will focus on phenomena when $0<\alpha<2$, as the $\alpha\geq 2$ fluctuation field behavior, already mentioned above, is more standard and straightforwardly can be shown to correspond to results in the nearest-neighbor setting \cite{gjs}.   

\medskip 

Our first result (Theorem \ref{drift_thm}) sets the stage for later limits and identifies, in a fixed frame of reference, the equilibrium fluctuations of the density field, for long range zero-range systems with the same nearest-neighbor weak asymmetries as in Subsection \ref{hyd_subsection}, namely $a(\pm 1) = O(n^{-(1-1/\alpha)})$,
 as corresponding to linearizations of the hydrodynamic limits near \eqref{zr_hyd}.  The limits are two types of fractional stochastic heat equations \eqref{gen_OU_DG} and \eqref{gen_OU_drift}, one without and one with a linear drift term, depending on whether $0<\alpha<1$ or $1\leq \alpha<2$ respectively.  Such equations were perhaps first considered in the literature with respect to limits of certain branching particle systems \cite{Dawson_Gorostiza}, \cite{Dawson_Gorostiza1}.    

Next, after absorbing linear drift terms, by observing these fluctuation fields in a moving frame with a characteristic velocity, we obtain a transition point at $\alpha = 3/2$ (Theorem \ref{secondorder_thm}).  Namely, for $3/2\leq \alpha<2$, when $a(\pm 1) = O(n^{-1+3/2\alpha})$, the equilibrium fluctuation limit points satisfy a martingale formulation of a fractional stochastic Burgers equation  \eqref{gen_OU_sec}.  While for $0<\alpha<3/2$, no matter the order of the asymmetry $a(\pm 1)$, the equilibrium fluctuation limit is the unique solution of a fractional stochastic heat equation without drift.

As mentioned in Subsection \ref{occ_subsection}, this result complements the work in \cite{bgs} with respect to `local' fluctuations of the exclusion occupation time, and shows a certain `universality' of the transition point $\alpha =3/2$ with respect to `bulk' fluctuations, in a general class of zero-range systems, across process characteristics.  In particular, the presence of the `gradient of the square' term in \eqref{gen_OU_sec}, when $\alpha\geq 3/2$, is more evidence that the `strongly' asymmetric system, when $a(\pm 1)$ is a nonzero constant, is in the standard KPZ class.  In this respect, we note for the parameter $\alpha = 3/2$, the process is not weakly asymmetric but `strongly' so.  We mention also it is open to show that the martingale formulation uniquely characterizes a limit solution of \eqref{gen_OU_sec}, although it suggests much of the structure of the equation (cf. Remark \ref{rmk_sec}).

The methods of the article, as in hydrodynamics, are to develop the stochastic differential of the fluctuation field ${\mathcal Z}^n_t$ and to close the equation by averaging nonlinear rate terms in terms of the field itself.  Such averaging, in the fluctuation field context, known as a Boltzmann-Gibbs principle, has been proved in a sharp form in \cite{gjs} for nearest-neighbor models.  Taking advantage of a long range adaptation, one can pass to the limit and obtain formally different SPDEs depending on parameters.  However, to make rigorous the convergence, unlike in the nearest-neighbor setting, as the fractional Laplacian $\Delta^{\alpha/2}$ does not take the class of Schwarz class functions to itself, several technical estimates are needed as in \cite{Dawson_Gorostiza}, \cite{Dawson_Gorostiza1} which considered related limits. 

\medskip 
In the next section, we define the zero-range model and state results.  In Section \ref{proofs}, in several subsections, the main statements are proved.

\section{Models and Results}
\label{notation-results}

After defining the zero-range model and stating assumptions, we proceed to the main results.

\subsection{Notation and Assumptions}
\label{notation}

Let $\{\eta^n_t: t\geq 0\}$ be a sequence of zero-range particle systems on state space $\Omega = \N_0^\Z$ where $\N_0=\{0,1,2,\ldots\}$.  The configuration $\eta_t = \{\eta_t(x): x\in \Z\}$ specifies the particle occupation numbers $\eta_t(x)$ at sites $x\in \Z$ at time $t\geq 0$.

Define the symmetric jump probability $s=s_\alpha: \Z \rightarrow [0,1]$ by
$$s(x) \ = \ \frac{c_\alpha}{|x|^{1+\alpha}}; \ \ \ x\neq 0$$
and $s(0)=0$ for $\alpha>0$ where $c_\alpha$ is a normalization constant.  Let also $a:\Z \rightarrow \R$ be an anti-symmetric function given by
$$a(x) \ = \ \left\{\begin{array}{rl}
1 & \ {\rm for \ }x=1\\
-1& \ {\rm for \ }x=-1\\
0& \ {\rm otherwise.}\end{array}\right.
$$
For $\gamma,\beta\geq 0$, define the jump probability $p = p_{n,\gamma,\beta,m}: \Z \rightarrow [0,1]$ by 
$$p(\cdot) \ = \ s(\cdot) + \frac{\beta}{n^\gamma} a(\cdot).$$
  Here, $n$ is taken large enough, say $n\geq n_0$, so that $0<p(\pm 1)<1$ when $\gamma>0$.  Similarly, when $\gamma=0$, $\beta>0$ is fixed small enough so that $0<p(\pm 1)<1$.  Without loss of generality, we may assume $n_0=1$.

Let also $g: \N_0\rightarrow \R_+$ be the `rate' for the process, such that $g(0)=0$ and $g(k)>0$ for $k\geq 1$.  Informally, the zero-range system is described as follows:  If there are $k$ particles at a location, $g(k)$ is the rate at which one of these particles jumps.  Then, the location to where it jumps to is governed by $p$.

With respect to $p$ and $g$, the dynamics of the process is given by generator
$$L_n f(\eta) \ = \ n\sum_{x\in \Z} \sum_{y\in \Z}p(y)g(\eta(x))\big\{f(\eta^{x,x+y}) - f(\eta)\big\}.$$ 
Here, $\eta^{v,w}$ is the configuration after a particle moves from $v$ to $w$.
$$\eta^{v,w}(z) \ = \ \left\{\begin{array}{rl}
\eta(v) -1 &  \ {\rm when \ } z=v\\
\eta(w)+1 & \ {\rm when \ } z=w\\
\eta(z) & \ {\rm otherwise.}
\end{array}\right.
$$

We will assume $g$ satisfies the following contstructibility condition.
\begin{itemize}
\item[(LIP)] There is a constant $M$ such that $\sup_k | g(k+1) - g(k)| \leq M<\infty$.
\end{itemize}

Under condition (LIP), the process $\eta^n_t$ can be constructed as a Markov process on $\Omega$ with a family of invariant measures $\{\bar\nu_\theta: 0\leq \theta <\theta_*\}$ where $\theta_*=\liminf_{k\uparrow \infty} g(k)$.  These probability measures, indexed by `fugacities', are product over lattice points in $\Z$ with common marginal given by 
$$\bar\nu_\theta(\eta(x) = k) \ = \ \frac{1}{Z_\rho} \frac{ \theta^k}{g(k)!}$$
for $k\geq 0$.  Here, $g(k)! = g(k)\cdots g(1)$ when $k\geq 1$ and $g(0)! = 1$.  

It will be convenient to index these measures by `density', that is $\rho(\theta) = \int \eta(0)d\bar\nu_\theta$.  One can see that $\rho$ is a strictly increasing function of $\theta$.  Let $\theta=\theta(\rho)$ be the inverse function and define $\nu_\rho = \bar\nu_{\theta(\rho)}$ for $0\leq \rho< \rho_*$ where $\rho_* = \lim_{\theta\uparrow  \theta_*}\rho(\theta)$.  

Moreover, with respect to a fixed $\nu_\rho$, the process can be realized as a Markov process on $L^2(\nu_\rho)$ with Markov generator $L_n$ and a core of local $L^2(\nu_\rho)$ functions.   The measure $\nu_\rho$ is also invariant with respect to the adjoint process, and is reversible when $p = s$.   See \cite{Andjel} and \cite{S_extremal} for more details about construction and invariant measures of the process.  

Here, a local function is one which depends only on a finite number of occupation variables $\{\eta(x): x\in \Z\}$. 
Also, in the following, we denote by $\P_\kappa$ and $\E_\kappa$ the measure and expectation of the process when started from initial measure $\kappa$.  Also, $E_\kappa$ and ${\rm Var}_\kappa$ will denote expectation and variance with respect to $\kappa$.  

Define, for a local $f$, the
function $\tilde{f}(z) = E_{\nu_z}[f]$, when the expectation makes sense.

The mixing properties of the system will play a role in the analysis.  Consider the localized process on the interval $\Lambda_\ell = \{x\in \Z: |x|\leq \ell\}$ with $k\geq 0$ particles and generator
$$S_{n,\ell} f(\eta) \ = \ \sum_{x,y\in \Lambda_\ell}g(\eta(x))\big\{f(\eta^{x,y}) - f(\eta)\big\}s(y-x).$$
For this Markov chain, the canonical measure $\nu_{k,\ell} = \nu_\rho(\cdot| \sum_{x\in \Lambda_\ell}\eta(x) = k\}$ is reversible and invariant.  Let $\lambda_{k,\ell}$ be the spectral gap, that is the second smallest eigenvalue of $-S_n$ (with $0$ being smallest).  Denote $W(k,\ell) = \lambda^{-1}_{k,\ell}$ and note the Poincar\'e inequality
$${\rm Var}_{\nu_{k,\ell}}(f) \ \leq W(k,\ell) D_n(f,\nu_{k,\ell})$$
where $D_n$ is the canonical Dirichlet form
$$D_n(f,\nu_{k,\ell}) \ = \ \frac{1}{2}\sum_{x,y\in \Lambda_\ell} E_{\nu_{k,\ell}}\big[g(\eta(x))\big\{f(\eta^{x,y})-f(\eta)\big\}^2\big]s(y-x).
$$

We will suppose the following condition which guarantees sufficient mixing for our purposes:
\begin{itemize}
\item[(SG)] There is a constant $C=C(\rho)$ such that 
$$E_{\nu_\rho} \left[ W\left(\sum_{x\in \Lambda_\ell}\eta(x), \ell\right)^2\right]  \ \leq \ C\ell^{2\alpha}.
$$
\end{itemize}
The condition (SG) is a condition on the rate $g$.  

There is a large class of rates for which this condition holds.  Consider the process on the complete graph with vertices in $\Lambda_\ell$ and Dirichlet form
$$D^{\rm unif}_n(f) \ := \ \frac{1}{(2\ell)}\sum_{x,y\in \Lambda_\ell}g(\eta(x))\big\{f(\eta^{x,y}) - f(\eta)\big\}.$$
Often the spectral gap $\lambda_{m}$ with respect to this mean-field process is easier to estimate.  Suppose the bound $\lambda_m \geq r(k,\ell)> 0$ holds.  Then, one can derive a bound on $W(k,\ell)$ for the long range dynamics noting that
$$[(2\ell)^\alpha/c_\alpha] D_n(f, \nu_{k,\ell}) \ \geq \ D^{\rm unif}_n(f).$$  
Then,
$${\rm Var}_{\nu_{k,\ell}}(f)  \ \leq \ r(k,\ell) D^{\rm unif}_n(f) \ \leq \ \frac{(2\ell)^\alpha r(k,\ell)}{c_\alpha} D_n(f, \nu_{k,\ell})$$
which gives the estimate $W(k,\ell) \leq r(k,\ell)[(2\ell)^\alpha/c_\alpha]$.

Suitable mean-field spectral gaps, which lead to verification of (SG), have been proved for a large class of processes.  In the following, $C$ is a constant not depending on $k$ or $\ell$.

\begin{itemize}

\item When $g(k+k_0) - g(k) \geq m_0$ for all $k\geq 1$, and $k_0\in \N$ and $m_0>0$ are fixed, $r(k,\ell)\leq C$ \cite{Caputo}.

\item When $g(k) = k^\beta$ for $0<\beta<1$, $r(k,\ell)\leq  C(1 + k/\ell)^\beta$  \cite{Nagahata}.
\item When $g(k) = 1(k\geq 1)$, $r(k,\ell) \leq C(1+ k/\ell)^2$ \cite{Morris}
\end{itemize}

\subsection{Results}

Let $\S(\R)$ be the standard Schwarz space of smooth, rapidly decreasing functions equipped with the usual metric.  Let also $\S'(\R)$ be the dual space of tempered distributions on $\R$ endowed with the strong topology.  Denote by $D([0,T], \S'(\R))$ and $C([0,T], \S'(\R))$ the function spaces of cadl\'ag and continuous maps respectively from $[0,T]$ to $\S'(\R)$.  

Let also $\widehat{C}$ be the space of infinitely differentiable functions with support contained in $(-\delta_0, T)$ for some $\delta_0>0$ fixed.  The bracket will $\langle \cdot,\cdot\rangle$ denote the duality with respect to $(\S'(\R), \S(\R))$ and other pairs of spaces when the context is clear.

For $0<\alpha<2$, let now $\Y^n_t$ be the density fluctuation field, acting on functions $H\in \S'(\R)$, given by
$$\Y^n_t(H) \ = \ \frac{1}{n^{1/2\alpha}}\sum_x H\left(\frac{x}{n^{1/\alpha}}\right)\big(\eta^n_{nt}(x) - \rho\big).$$

%

Throughout this article, the initial configuration $\eta^n_0$ will be distributed according to a fixed $\nu_\rho$.  Then, from the central limit theorem, for each fixed $t\geq 0$, $\Y^n_t$ converges in distribution to $\dot\W_0$, the spatial White noise with standard covariance $\langle \dot\W_0(G), \dot\W_0(H)\rangle = \sigma^2(\rho)\int_\R G(x)H(x)dx$ where $\sigma^2(\rho) = E_{\nu_\rho}\big[\eta(0)-\rho)^2\big]$.  

Define also the space-time White noise $\dot\W_t$ with covariance
$$\left\langle \dot\W_t(G), \dot\W_s(H)\right\rangle \ = \ \delta(t-s) \sigma^2(\rho)\int_\R G(x)H(x)dx.$$

When $\beta = 0$ and $0<\alpha<2$, that is when $p=s$ and the process is symmetric, a martingale form of 
the following `equilibrium fluctuations' result was shown in \cite{Jara_longrange}.  See also Theorem \ref{drift_thm}, which when $\beta=0$, recovers this statement. 

\begin{proposition}
\label{symmetric_prop}
When $\beta = 0$ and $0<\alpha< 2$, starting from initial measure $\nu_\rho$, the sequence $\Y^n_t$, as $n\uparrow\infty$, converges in the uniform topology on $D([0,T], \S'(\R))$ to the unique process $\Y_t$ which solves the generalized Ornstein-Uhlenbeck equation
\begin{equation}
\label{gen_OU_DG}
\partial_t \Y_t \ = \ \tilde{g}'(\rho)\Delta^{\alpha/2}\Y_t + \sqrt{\tilde{g}(\rho)}\nabla^{\alpha/2} \dot\W_t.
\end{equation}
\end{proposition}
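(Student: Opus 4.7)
The plan is to follow the standard martingale approach for equilibrium fluctuations. Fix $H\in\S(\R)$ and write, by Dynkin's formula,
\begin{equation*}
M^n_t(H) \ = \ \Y^n_t(H) - \Y^n_0(H) - \int_0^t L_n \Y^n_s(H)\,ds,
\end{equation*}
a mean-zero martingale whose predictable quadratic variation is given by the carr\'e du champ formula. Because $p = s$ is symmetric, a discrete summation by parts, together with the invariance of $\nu_\rho$, yields
\begin{equation*}
L_n \Y^n_s(H) \ = \ \frac{1}{n^{1/2\alpha}}\sum_x g(\eta^n_{ns}(x))\,\big(\Delta^{\alpha/2}_n H\big)\!\left(\frac{x}{n^{1/\alpha}}\right),
\end{equation*}
where $\Delta^{\alpha/2}_n$ is a discrete fractional Laplacian whose scaling (time acceleration combined with the $|y|^{-(1+\alpha)}$ kernel rescaled by $y\mapsto n^{1/\alpha}y$) matches the continuum $\Delta^{\alpha/2}$.

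Next I would invoke a long-range Boltzmann--Gibbs principle to replace $g(\eta(x))$ by its linearization $\tilde g(\rho) + \tilde g'(\rho)(\eta(x)-\rho)$, with error vanishing in $L^2(\P_{\nu_\rho})$. The crucial input is the spectral gap assumption (SG), which controls the block averages exactly as in the nearest-neighbor setting, but on scale $\ell^\alpha$. Since $\int_\R \Delta^{\alpha/2} H\,dx = 0$, the constant term $\tilde g(\rho)$ contributes negligibly once tail decay of $\Delta^{\alpha/2} H$ is exploited to handle the Riemann-type sum, and the linear part leaves
\begin{equation*}
\int_0^t \tilde g'(\rho)\,\Y^n_s(\Delta^{\alpha/2} H)\,ds,
\end{equation*}
which is precisely the drift in \eqref{gen_OU_DG}. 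A parallel one-block computation identifies the quadratic variation limit as $\tilde g(\rho)\,t\,\|(-\Delta)^{\alpha/4} H\|_{L^2}^2$, reproducing the covariance of $\sqrt{\tilde g(\rho)}\,\nabla^{\alpha/2}\dot\W_t$.

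Third, I would establish tightness of $\{\Y^n_\cdot\}$ in $D([0,T],\S'(\R))$ via Mitoma's criterion, reducing to tightness of the real-valued processes $\{\Y^n_t(H)\}$ for each $H\in\S(\R)$; this follows from Aldous' criterion using the second-moment bounds obtained from the martingale, the quadratic variation estimate, and the integrated drift. Any limit point is then identified as a solution of the martingale problem associated with \eqref{gen_OU_DG}, and uniqueness of such a solution is obtained by the standard duality argument, testing against the semigroup generated by $\tilde g'(\rho)\Delta^{\alpha/2}$.

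The principal obstacle, as noted in the introduction, is that $\Delta^{\alpha/2}$ does not map $\S(\R)$ into itself, so iterating the drift in any identification argument immediately exits the Schwarz class. I would handle this by enlarging the admissible test-function class to a $\Delta^{\alpha/2}$-invariant subspace (in the spirit of Dawson--Gorostiza), proving the martingale identification there, and then recovering the $\S'(\R)$-valued statement by density. A secondary technical point is to establish uniform pointwise tail bounds on $\Delta^{\alpha/2} H$ and its discrete approximants $\Delta^{\alpha/2}_n H$ -- the same estimates that are needed to kill the constant-term contribution and to justify replacing $\Delta^{\alpha/2}_n$ by $\Delta^{\alpha/2}$ inside the rescaled field.
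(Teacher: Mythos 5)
Your proposal is correct and follows essentially the same route as the paper: the Dynkin martingale decomposition of $\Y^n_t(H)$, a long-range Boltzmann--Gibbs principle (the paper's Proposition \ref{gbg_L2}) to linearize $g$, Mitoma's criterion for tightness, and the Dawson--Gorostiza device of extending the test-function class beyond $\S(\R)$ (the paper uses the weighted spaces $C_{p,0}(\R)$ and the evolution-equation uniqueness of \eqref{3.3}) to cope with $\Delta^{\alpha/2}$ not preserving the Schwarz class. The paper in fact obtains this proposition as the $\beta=0$ specialization of Theorem \ref{drift_thm}, proved by the same scheme as Theorem \ref{secondorder_thm}, so no substantive difference remains.
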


Recall the fractional Laplacian operator $\Delta^{\alpha/2}$ acting on $H\in \S(\R)$ is given by
$$(\Delta^{\alpha/2}H)(x) \ = \ \frac{1}{2}\int_{\R} s(y) \left[ H(x+y) -2H(x) + H(x-y)\right]dy.$$
For $G,H\in \S(R)$, the covariance $E_{\nu_\rho}\big[\Y_t(G)\Y_t(H)\big] = \sigma^2(\rho)\int_\R G(x)H(x)dx$, and the noise 
$$dN_t \ := \ \nabla^{\alpha/2} \dot\W_t,$$ when integrated in time, is a spatial White noise with covariance
\begin{eqnarray*}
&&E_{\nu_\rho}\left[\int_0^t\nabla^{\alpha/2}\dot \W_s(H) ds \cdot \int_0^t\nabla^{\alpha/2}\dot\W_s(G)ds\right]\\
&&\ \  = \ \sigma^2(\rho)\int_{\R}\int_{\R} s(y)\big(H(x+y)-H(x)\big)\big(G(x+y)-G(x)\big)dydx\\
&&\ \ = \ \sigma^2(\rho)\int_\R G(x) \Delta^{\alpha/2}H(x)dx.
\end{eqnarray*}
When $G=H$, we say $\|\nabla^{\alpha/2}G\|^2_{L^2(\R\times \R)} := \sigma^2(\rho)\int_\R\int_\R s(y) \big( G(x+y) - G(x)\big)^2dydx$.

Let $\{T_t: t\geq 0\}$ be the semigroup of symmetric bounded linear operators on $L^2(\R)$ generated by $\Delta^{\alpha/2}$ (cf. \cite{Ito_Mckean}).  Symbolically, \eqref{gen_OU_DG} translates to 
\begin{equation}
\label{mild_form}
\Y_t \ = \ T_tY_0 + \int_0^t T_{t-s}dN_s.\end{equation}
    
Unfortunately, $\Delta^{\alpha/2}H$ and $T_t H$ do not in general belong to $\S(\R)$, and so the middle term on the right-side of \eqref{gen_OU_DG} and also terms in \eqref{mild_form}, in weak formulations, do not make apriori sense.  However, as shown and discussed in Proposition 3.3 and Remark 3.4(a) in \cite{Dawson_Gorostiza} (see also \cite{Dawson_Gorostiza1}), suppose all terms of \eqref{gen_OU_DG} and \eqref{mild_form} make sense when integrated with respect to functions in the nuclear space $\Phi_t(x)\in \S(\R)\otimes \widehat{C}$ topologized by
norms $\|\Phi\|_k = \max_{0\leq |\ell|\leq n}\sup_{x\in \R, t\in [-\delta_0, T]}(1+ |x|^2)^k|D^\ell\Phi_t(x)|$ where the $\ell$th order derivative $D^\ell$ acts on variables $x,t$ (cf. \cite{Treves}).  That is,
\begin{equation}
\label{terms}
\int_0^T \langle \Y_t, \Delta^{\alpha/2}\Phi_t\rangle dt, \ \Big\langle \Y_0, \int_0^T T_t \Phi_tdt\Big\rangle, \ \ {\rm and \  \ }
\int_0^T \Big\langle N_s, \int_s^T T_{t-s}\partial_t \Phi_t dt\Big\rangle ds,\end{equation}
 can be seen to define $\big(\S(\R)\otimes \widehat{C}\big)'$-valued random variables.  And, suppose the equation
\begin{equation}
\label{3.3}
\int_0^T \langle \Y_t, \partial_t \Phi_t + \Delta^{\alpha/2}\Phi_t\rangle dt \ = \ - \langle \Y_0, \Phi_0\rangle + \int_0^T \langle N_t, \partial_t\Phi_t\rangle dt\end{equation}
holds, then one concludes the evolution equation
  $$
  \int_0^T \langle \Y_t, \Phi_t\rangle dt \ = \ \Big\langle \Y_0, \int_0^T T_t\Phi_tdt\Big\rangle - \int_0^T \Big\langle N_s, \int_s^T T_{t-s}\partial_t \Phi_tdt\Big\rangle ds$$
  also holds.  We remark by the Hahn Banach theorem the terms in \eqref{terms} and associated equations above can be extended to larger domains (cf. Remark 3.4(a) in \cite{Dawson_Gorostiza} and Remark 3.3 in \cite{Dawson_Gorostiza1}).  As at most one process can satisfy the evolution equation, and the process defined by the action $\int_0^T \langle \Y_t, \Phi_t\rangle dt$, determines $\Y_t$ (Lemma 2.3 in \cite{Dawson_Gorostiza1}), these facts show \eqref{3.3} has a unique solution.  
  
  Part of the proof of our later results is to argue that the limit process $\Y_t$ satisfies all of these ingredients, when the limit equation is linear.

\medskip 
When $\beta> 0$, the strength of the weak-asymmetry $\gamma$ should be specified.  It turns out $\gamma$ should depend on $\alpha$ to obtain nontrivial limits.  

\begin{theorem}
\label{drift_thm}
Starting from initial measure $\nu_\rho$, the sequence $\Y^n_t$, as $n\uparrow\infty$, converges in the uniform topology on $D([0,T], \S'(\R))$ to the unique process $\Y_t$ which solves the following generalized Ornstein-Uhlenbeck equations:

 When $\beta \geq 0$, $\gamma = 1 - 1/\alpha$, and $1\leq \alpha < 2$, 
\begin{equation}
\label{gen_OU_drift}
\partial_t \Y_t \ = \ \tilde{g}'(\rho)\Delta^{\alpha/2}\Y_t + \beta\tilde{g}'(\rho)\nabla \Y_t + \sqrt{\tilde{g}(\rho)}\nabla^{\alpha/2} \dot\W_t.
\end{equation}

 When $\beta \geq 0$ and $0<\alpha<1$, no matter the value of $\gamma\geq 0$, $\Y_t$ satisfies the symmetric process limit equation \eqref{gen_OU_DG}.

\end{theorem}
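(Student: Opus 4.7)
\medskip
\noindent\textbf{Proof proposal.} The plan is to follow the standard equilibrium fluctuations scheme: derive a martingale decomposition for $\Y^n_t(H)$ via Dynkin, identify the drift terms using a long-range Boltzmann--Gibbs linearization of $g(\eta(x))$ around $\tilde{g}(\rho)+\tilde{g}'(\rho)(\eta(x)-\rho)$, show tightness, and then match the limit to the unique solution of \eqref{gen_OU_drift} (or \eqref{gen_OU_DG}) via the Dawson--Gorostiza evolution-equation framework already quoted in the excerpt. Writing $F_H(\eta) = n^{-1/(2\alpha)}\sum_x H(x/n^{1/\alpha})\eta(x)$, one has $\Y^n_t(H) = \Y^n_0(H) + \int_0^t L_n F_H(\eta^n_s)\,ds + M^n_t(H)$. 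Splitting $p=s+(\beta/n^\gamma)a$ and using that $\eta^{x,x+y}$ shifts mass from $x$ to $x+y$, the symmetric part gives
\[
n^{1-1/(2\alpha)}\sum_x g(\eta_s(x))\sum_y s(y)\bigl[H((x{+}y)/n^{1/\alpha})-H(x/n^{1/\alpha})\bigr] \;\approx\; n^{-1/(2\alpha)}\sum_x g(\eta_s(x))\,\Delta^{\alpha/2}H(x/n^{1/\alpha}),
\]
after a Riemann-sum approximation exploiting $s(y)=c_\alpha|y|^{-(1+\alpha)}$, while the antisymmetric part contributes
\[
2\beta\, n^{1-\gamma-1/\alpha}\cdot n^{-1/(2\alpha)}\sum_x g(\eta_s(x))\,\nabla H(x/n^{1/\alpha}) + \text{l.o.t.}
\]

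\medskip
\noindent The exponent $1-\gamma-1/\alpha$ dictates the transition: when $1\le\alpha<2$ and $\gamma=1-1/\alpha$ it equals $0$, so the drift survives; when $0<\alpha<1$ we always have $1-\gamma-1/\alpha<0$ for $\gamma\ge0$, so it vanishes. The next step is to invoke the Boltzmann--Gibbs principle to replace $g(\eta_s(x))-\tilde{g}(\rho)$ by $\tilde{g}'(\rho)(\eta_s(x)-\rho)$ in the time integral; the constants $\tilde{g}(\rho)$ that remain integrate to zero against $\Delta^{\alpha/2}H$ and $\nabla H$ (Schwartz decay). After this replacement, the drift portion of the decomposition is
\[
\int_0^t\Bigl(\tilde{g}'(\rho)\Y^n_s(\Delta^{\alpha/2}H) + \mathbf{1}_{\{1\le\alpha<2,\,\gamma=1-1/\alpha\}}\,2\beta\,\tilde{g}'(\rho)\Y^n_s(\nabla H)\Bigr)ds + o_n(1),
\]
with the indicator term absent in both subcases of $0<\alpha<1$. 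Establishing this Boltzmann--Gibbs replacement is the main technical obstacle: it is the long-range analogue of the sharp form proved in \cite{gjs}, and its verification relies crucially on the spectral-gap hypothesis (SG) via a multi-scale doubling argument, using $W(k,\ell)\lesssim\ell^{2\alpha}$ to bound variances of space-time averages against the Dirichlet form. The parameter regime $\alpha<2$ (rather than the KPZ threshold $3/2$) is what makes the \emph{linear} replacement sufficient.

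\medskip
\noindent The martingale $M^n_t(H)$ has predictable quadratic variation $\int_0^t n\sum_{x,y}p(y)g(\eta^n_s(x))(\Delta F_H)^2\,ds$, and a Riemann-sum calculation analogous to the one above shows this converges in $L^1$ to $t\,\tilde{g}(\rho)\int\!\!\int s(z)[H(w+z)-H(w)]^2\,dz\,dw$; the contribution of the asymmetric part is of order $n^{-2/\alpha}$ times the asymmetric prefactor and hence vanishes. Tightness in $D([0,T],\S'(\R))$ follows from Mitoma's criterion together with the Aldous tightness condition applied to each $\Y^n_\cdot(H)$; this is routine given the preceding drift and martingale estimates.

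\medskip
\noindent For identification of the limit $\Y_t$, one combines the martingale decomposition with an integration by parts in time against test functions $\Phi_t(x)\in \S(\R)\otimes\widehat{C}$ to obtain that any limit point satisfies the integrated equation
\[
\int_0^T\!\!\langle\Y_t,\partial_t\Phi_t+\tilde{g}'(\rho)\Delta^{\alpha/2}\Phi_t + \beta\tilde{g}'(\rho)\nabla\Phi_t\rangle dt \;=\; -\langle\Y_0,\Phi_0\rangle + \int_0^T\!\!\langle N_t,\partial_t\Phi_t\rangle dt,
\]
(with $\beta\to 0$ in the second case), where $\Y_0$ is spatial white noise by the CLT under $\nu_\rho$. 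Because $\Delta^{\alpha/2}$ does not map $\S(\R)$ into itself, one must verify the three terms in \eqref{terms} are well-defined random variables on the larger nuclear space, using weighted estimates on $T_t\Phi_t$ and $\Delta^{\alpha/2}\Phi_t$ adapted to the norms $\|\Phi\|_k$, exactly as in Proposition~3.3 of \cite{Dawson_Gorostiza}. Once this is done the Hahn--Banach/uniqueness argument recalled after \eqref{3.3} identifies the limit as the unique generalized Ornstein--Uhlenbeck process solving \eqref{gen_OU_drift} or \eqref{gen_OU_DG}, and convergence holds for the uniform topology on $D([0,T],\S'(\R))$ because the limit has continuous paths. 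The hardest ingredient is the long-range Boltzmann--Gibbs replacement under (SG); the Dawson--Gorostiza regularization of the Schwartz space is primarily bookkeeping but must be executed carefully.
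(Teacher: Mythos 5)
Your proposal is correct and takes essentially the same route as the paper: the paper proves only Theorem \ref{secondorder_thm} in detail and notes the argument for Theorem \ref{drift_thm} is analogous, but its ingredients---the fixed-frame generator decomposition of Subsection \ref{Assoc_mart_section} with the asymmetric prefactor $2\beta n^{1-\gamma-1/\alpha}$, the first-order Boltzmann--Gibbs replacement (second display of Proposition \ref{gbg_L2}), Mitoma tightness, and the Dawson--Gorostiza identification via \eqref{terms}--\eqref{3.3}---are exactly the steps you outline. The only remark worth recording is that your drift constant $2\beta\tilde{g}'(\rho)$ is what the generator computation actually produces, whereas \eqref{gen_OU_drift} states $\beta\tilde{g}'(\rho)$; the paper is itself inconsistent on this factor of $2$ (compare the definition of $\Y^{n,\rightarrow}_t$ with \eqref{shifted_H}), so this is not a defect of your argument.
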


\begin{remark}\rm
Here, the long range strength parameter $\alpha$ is a transition point, expected as it is already present with respect to the associated hydrodynamic equation \eqref{zr_hyd}.  In words, when $\alpha<1$, the fluctuation field limit for the $\beta>0$ asymmetric process, whether weak asymmetric or even plainly asymmetric, is the limit for the symmetric process.  However, when $\alpha\geq 1$, for the tuned asymmetric process, with $\gamma= \gamma(\alpha)$ chosen appropriately, the limit equation involves an extra drift.    

We note a `crossover' effect is implied straightforwardly by the generator calculation leading to Theorem \ref{drift_thm}:  When $1\leq \alpha< 2$ and $\gamma> 1-1/\alpha$, the extra drift term in \eqref{gen_OU_drift} disappears, and $\Y_t$ solves the symmetric process limit equation \eqref{gen_OU_DG}.

The equation \eqref{gen_OU_drift} can be written in terms of \eqref{gen_OU_DG} by introducing a reference frame shift:  That is, let ${\mathcal Z}_t(G) = \Y_t(G(\cdot - \beta\tilde{g}'(\rho)t)$ for $G\in \S(\R)$.  Then, ${\mathcal Z}_t$ satisfies the driftless \eqref{gen_OU_DG}.  Hence, well-posedness and uniquess of the solution of \eqref{gen_OU_drift} follows from that of the symmetric process limit equation.
\end{remark}

To probe second-order effects, we now absorb the drift in \eqref{gen_OU_drift}, by observing the fluctuation field moving with a `characteristic' velocity.  Define $\Y^{n,\rightarrow}_t$, in terms of its action on $H\in \S(\R)$, as
$$\Y^{n, \rightarrow}_t(H) \ = \ \frac{1}{n^{1/2\alpha}}\sum_x H\left(\frac{x}{n^{1/\alpha}} - \frac{1}{n^{1/\alpha}}\left\{\frac{\beta \tilde{g}'(\rho)tn}{n^\gamma}\right\}\right)\big(\eta^n_t(x) - \rho\big).$$
Again, the possible limits of $\Y^{n, \rightarrow}_t$ when $\beta>0$ depend on the strength of the weak-asymmetry $\gamma$.

It will turn out, as discussed in the introduction, when $\gamma = 1-3/2\alpha$ and $3/2\leq \alpha\leq 2$, the asymmety is significant enough to introduce a `quadratic' term in the limit.  
Formally, the limits of $\Y^{n, \rightarrow}_t$ satisfy a type of (ill posed) fractional KPZ-Burgers equation,
\begin{equation}
\label{gen_OU_sec}
\partial_t \Y_t \ = \ \tilde{g}'(\rho)\Delta^{\alpha/2}\Y_t + \beta\tilde{g}''(\rho)\nabla \Y^2_t + \sqrt{\tilde{g}(\rho)}\nabla^{\alpha/2}\dot\W_t.
\end{equation}
We note if one replaces $\Delta^{\alpha/2}$ and $\nabla^{\alpha/2}$ by $\Delta/2$ and $\nabla$ respectively then the equation reduces to KPZ-Burgers equation which governs $\nabla h_t$, where $h_t$ satisfies a KPZ equation. 

To give a sense to this equation, as in \cite{gjs}, we define the notion of an `$L^2$-energy' martingale formulation of \eqref{gen_OU_sec}.  
Let $\iota: \bb R \to [0,\infty)$ be given by $\iota(z) = (1/2)1_{[-1,1]}(z)$ and, for $\varepsilon>0$, let
$\iota_\varepsilon(z) = \varepsilon^{-1} \iota(\varepsilon^{-1}z)$.
Let also $G_\varepsilon: \bb R \to [0,\infty)$ be a smooth compactly supported approximating function in $\S(\R)$ such that $\|G_\varepsilon\|^2_{L^2(\R)} \leq 2\|\iota_\varepsilon\|^2_{L^2(\R)}=\varepsilon^{-1}$ and 
$$\lim_{\varepsilon\downarrow 0}\varepsilon^{-1/2}\|G_\varepsilon -\iota_\varepsilon\|_{L^2(\R)}  \ = \ 0.$$
Such approximating functions can be found by convoluting $\iota_\varepsilon$ with smooth kernels.  
Let also, for $x\in \R$, $\tau_x$ be the shift so that $\tau_x G_\varepsilon(z) = G_\varepsilon(x+z)$.

For an $\S'(\R)$-valued process $\{\mc Y_t; t \in [0,T]\}$ and for $0\leq s\leq t\leq T$, define
\[
\mc A_{s,t}^\varepsilon(H) \ =\  \int_s^t \int\limits_{\bb R} \nabla H(x) \Big[\mc \Y_u(\tau_{-x} G_\varepsilon)\Big]^2  dx du.
\]
The process $\Y_\cdot$ satisfies the {\em $L^2$ energy condition} if
for $H\in \S(\R)$,
\begin{equation}
\label{en.cond}
\{\A_{s,t}^\varepsilon(H)\} {\rm \ is \ Cauchy \ in \ }L^2(\nu_\rho) {\rm \ as \ }\varepsilon \downarrow 0
\end{equation}
and the limit does not depend on the specific smoothing family $\{G_\varepsilon\}$.
Define the process  
$\{\mc A_{s,t}; 0\leq s\leq t\leq T\}$ given by 
$$\mc A_{s,t}(H) \ := \ \lim_{\varepsilon\downarrow 0} \mc A^\varepsilon_{s,t}(H),$$
which is $\S'(\R)$ valued (cf. p. 364-365; Theorem 6.15 
of \cite{Walsh}).

We will say that $\{\mc Y_t; t \in [0,T]\}$ is a {\em fractional $L^2$-energy solution} of \eqref{gen_OU_sec} if the following holds.
\begin{itemize}
\item[(i)] Initially, $\mc Y_0$ is a spatial Gaussian process with covariance, for $G,H \in \S(\R)$, 
$${\rm Cov}(\mc Y_0(G),\mc Y_0(H)) \ = \ \sigma^2(\rho)\int_\R G(x)H(x)dx.$$
\item[(ii)] The integral $\int_0^T \Y_s(\Delta^{\alpha/2}H_s)ds$, for $H_\cdot \in \S(\R)\otimes \widehat C$, defines an $(\S(\R)\otimes \widehat C)'$ random variable. 
\item[(iii)] The process $\{\mc Y_t; t \in [0,T]\}$ satisfies the $L^2$-energy condition \eqref{en.cond}.
\item[(iv)] The $\S'(\R)$ valued
process $\{\M_t: t\in [0,T]\}$ where
\begin{equation*}
\label{formal_conservative}
\mc M_t(H) \ :=\  \mc Y_t(H) - \mc Y_0(H) -  \tilde{g}'(\rho)\int_0^t \mc Y_s(\Delta^{\alpha/2} H) ds - \beta \tilde{g}''(\rho) \mc A_{0,t}(H)
\end{equation*}
is a continuous martingale (a Brownian motion by Levy's theorem) with quadratic variation
\[
\< \mc M_t(H)\> \ =\  \tilde{g}(\rho)t\|\nabla^{\alpha/2}H\|^2_{L^2(\R\times\R)}.
\]
\end{itemize}

\begin{theorem}
\label{secondorder_thm}
Starting from initial measure $\nu_\rho$, when $3/2\leq \alpha< 2$, $\beta>0$, and $\gamma = 1- 3/2\alpha$, the sequence $\{\Y^{n, \rightarrow}_t: t\in [0,T]\}_{n\geq 1}$ is tight in the uniform topology on $D([0,T], \S'(\R))$, and any limit point $\Y_t$ is an fractional $L^2$-energy solution of \eqref{gen_OU_sec}.

However, for $\beta\geq 0$, when $0<\alpha<3/2$, no matter the value of $\gamma\geq 0$, $\Y^{n, \rightarrow}_t$ converges in the uniform topology on $D([0,T], \S'(\R))$ to the unique process $\Y_t$ which solves the symmetric process limit equation \eqref{gen_OU_DG}.

\end{theorem}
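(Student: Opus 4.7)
The plan is to apply Dynkin's formula to $\Y^{n,\rightarrow}_t(H)$ and close the resulting stochastic equation through long-range adaptations of the first- and second-order Boltzmann-Gibbs principles of \cite{gjs}. Let $\M^n_t(H)$ denote the Dynkin martingale; the action of $nL_n$ on the fluctuation field splits into a symmetric piece coming from $s(\cdot)$ and an antisymmetric piece from $(\beta/n^\gamma)a(\cdot)$. After a summation by parts in the jump variable $y$, the symmetric piece produces a discrete fractional Laplacian (built from $s$) of $H$ at spatial scale $n^{1/\alpha}$ applied to $g(\eta(x))-\tilde g(\rho)$. A first-order Boltzmann-Gibbs replacement of $g(\eta(x))-\tilde g(\rho)$ by $\tilde g'(\rho)(\eta(x)-\rho)$, together with pointwise convergence of the discrete fractional Laplacian to $\Delta^{\alpha/2}$ on $\S(\R)$, turns the symmetric contribution into $\tilde g'(\rho)\int_0^t \Y^{n,\rightarrow}_s(\Delta^{\alpha/2}H)\,ds$ up to a vanishing error.

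The antisymmetric piece is where the main work lies. After the time rescaling by $n$ its prefactor is $\beta n^{1-\gamma}$, and a Taylor expansion of $g$ around $\rho$ splits it into a linear and a nonlinear part. The linear part contributes $\beta\tilde g'(\rho) n^{1-\gamma-1/\alpha}\int_0^t \Y^n_s(\nabla H)\,ds$, and the reference frame defining $\Y^{n,\rightarrow}_t$ is engineered precisely so that differentiating the shifted test function $H(\,\cdot\, - \beta\tilde g'(\rho)tn^{1-\gamma-1/\alpha})$ produces an equal and opposite term, cancelling this drift. What remains is the quadratic/higher Taylor remainder of $g$, with prefactor $\tfrac{1}{2}\beta\tilde g''(\rho)n^{1-\gamma}$, applied to $(\eta^n_s(x)-\rho)^2-\sigma^2(\rho)$ summed over sites $x$ against $\nabla H(x/n^{1/\alpha})/n^{1/\alpha}$. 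Matching powers of $n$ shows that such a sum is of order one precisely when spatial averaging over blocks of macroscopic size $\varepsilon n^{1/\alpha}$ absorbs a factor $n^{1-3/(2\alpha)}$, corresponding to $\gamma = 1-3/(2\alpha)$. For $0<\alpha<3/2$ the critical scaling $1-3/(2\alpha)$ is negative, and a direct $L^2(\nu_\rho)$-estimate shows that for every admissible $\gamma\ge 0$ the quadratic remainder vanishes in $L^2$ as $n\to\infty$, so the limit equation reduces to \eqref{gen_OU_DG}; the uniqueness discussed after Proposition~\ref{symmetric_prop} then upgrades tightness plus identification of limit points to convergence in the uniform topology.

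To give meaning to the quadratic piece in the supercritical range $3/2\le \alpha<2$ at the critical scaling $\gamma=1-3/(2\alpha)$, I would invoke a second-order Boltzmann-Gibbs principle, replacing $(\eta^n_s(x)-\rho)^2-\sigma^2(\rho)$ by $[\Y^{n,\rightarrow}_s(\tau_{-x/n^{1/\alpha}}G_\varepsilon)]^2$ minus a deterministic constant, with error controlled by a one-block/two-block decomposition. The one-block estimate relies on the spectral gap bound (SG), $E_{\nu_\rho}[W(\sum_{x\in\Lambda_\ell}\eta(x),\ell)^2]\le C\ell^{2\alpha}$, applied on boxes of size $\varepsilon n^{1/\alpha}$; the two-block step uses the Dirichlet energy supplied by the long-range kernel $s(y)\sim|y|^{-(1+\alpha)}$ to couple adjacent boxes. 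The choice $\gamma = 1-3/(2\alpha)$ is exactly the break-even point where the tolerated error from (SG) matches the amplitude $n^{1-\gamma}$ of the asymmetric term, in direct analogy to \cite{gjs} in the nearest-neighbor case. Sending $\varepsilon\downarrow 0$ after $n\uparrow\infty$ identifies the limit of the quadratic piece as $\beta\tilde g''(\rho)\mc A_{0,t}(H)$ and simultaneously verifies the $L^2$-energy condition \eqref{en.cond}. Tightness on $D([0,T],\S'(\R))$ follows from Mitoma's criterion combined with Aldous's condition applied to each real-valued coordinate $\Y^{n,\rightarrow}_\cdot(H)$, using the $L^2(\nu_\rho)$-bounds from the above decomposition; the $O(n^{-1/(2\alpha)})$ prelimit jumps force continuity of limit paths. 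The quadratic variation of $\M^n_t(H)$, computed from the carr\'e du champ, converges to $\tilde g(\rho)t\|\nabla^{\alpha/2}H\|^2_{L^2(\R\times\R)}$, identifying the limit martingale as Brownian motion by L\'evy's theorem. The main obstacle is the sharp second-order Boltzmann-Gibbs step at the critical scaling, since $\Delta^{\alpha/2}$ does not preserve $\S(\R)$ and blocks of macroscopic size $\varepsilon n^{1/\alpha}$ must be handled; this requires combining (SG) with the nuclear-space framework of \cite{Dawson_Gorostiza}, \cite{Dawson_Gorostiza1} to keep every prelimit operation on test functions well-defined while closing the energy estimate.
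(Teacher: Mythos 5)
Your proposal follows essentially the same route as the paper: the Dynkin decomposition with the frame shift cancelling the linear drift, a long-range Boltzmann--Gibbs replacement at block scale $\varepsilon n^{1/\alpha}$ under (SG) yielding the critical exponent $\gamma=1-3/2\alpha$ and the energy condition, the vanishing of the quadratic term for $\alpha<3/2$, Mitoma tightness, L\'evy identification of the martingale, and the Dawson--Gorostiza nuclear-space framework to handle $\Delta^{\alpha/2}$ not preserving $\S(\R)$. The only cosmetic differences are that the paper controls the second-order replacement via the $H_{-1}$-norm multiscale estimate adapted from \cite{gjs} rather than an explicit one-block/two-block scheme, and proves tightness of the time marginals by Kolmogorov--Centsov-type bounds rather than Aldous's criterion.
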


\begin{remark}\label{rmk_sec}\rm
The result indicates a transition point when long range strength parameter $\alpha=3/2$, consistent with `local' fluctuation results in \cite{bgs} (cf. Subsection \ref{occ_subsection}).  Namely, when $\alpha<3/2$, the characterteristic velocity translated fluctuation field limit for the $\beta>0$ asymmetric process, no matter the strength of the asymmetry, is the limit for the symmetric process.  

However, when $\alpha\geq 3/2$, under an appropriate asymmetry scale $\gamma= \gamma(\alpha)$, the limit points satisfy a martingale formulation of a `fractional' KPZ-Burgers equation, involving a `quadratic gradient' term.   We remark, although quite suggestive, it is open to show this martingale formulation would characterize a unique process satisfying it. 

Again, by the proof of Theorem \ref{secondorder_thm}, there is a `crossover' effect in that, for $3/2\leq \alpha\leq 2$ and $\gamma> 1-3/2\alpha$, the `quadratic' term drops out and the sequence $\Y^{n, \rightarrow}_t$ converges in the uniform topology on $D([0,T], \S'(\R))$ to the unique solution of \eqref{gen_OU_DG}.
\end{remark}

\section{Proofs}
\label{proofs}
The arguments for Theorems \ref{drift_thm} and \ref{secondorder_thm} adapt the `hydrodynamics' scheme of \cite{gjs}, with some new features, to the long-range context, developing the stochastic differential of $\Y^n_t$ and $\Y^{n, \rightarrow}_t$ into drift and martingale terms, before analyzing their limits.  Since the arguments of the two theorems are similar, to simplify the discussion, we only prove in detail Theorem \ref{secondorder_thm}, which is most involved.

In Subsection \ref{Assoc_mart_section}, various generator actions are computed in general.  Then, in Subsection \ref{BG_statement}, a general `Boltzmann-Gibbs' principle is stated which will help close equations.  In Subsection \ref{tightness}, tightness of the processes in Theorem \ref{secondorder_thm} is shown.  In Subsection \ref{generalized_domain}, we discuss essential notions which put the fractional stochastic heat equation in \eqref{gen_OU_DG} on a firm footing.  Finally, in Subsection \ref{identification}, we identify limit points and at the end finish the proof of Theorem \ref{secondorder_thm}.

\subsection{Stochastic differentials}
\label{Assoc_mart_section}

For $H\in \S(\R)$, $x\in \Z$, $0<\alpha<2$, and $n\geq 1$, define scaled and unscaled operators:
\begin{eqnarray*}
\Delta_{x,y}^nH &=& H\left(\frac{x+y}{n^{1/\alpha}}\right) + H\left(\frac{x-y}{n^{1/\alpha}}\right) - 2H\left(\frac{x}{n^{1/\alpha}}\right), \\ 
\nabla_{x}^nH &=& \frac{n^{1/\alpha}}{2}\left\{H\left(\frac{x+1}{n^{1/\alpha}}\right) - H\left(\frac{x- 1}{n^{1/\alpha}}\right)\right\},
\end{eqnarray*}
\begin{eqnarray*}
\e^n_x H&=& n^{1/\alpha}\left\{H\left(\frac{x+1}{n^{1/\alpha}}\right) - H\left( \frac{x}{n^{1/\alpha}}\right)\right\}\\
\d^n_{x,y}H & = & H\left(\frac{x+y}{n^{1/\alpha}}\right) - H\left(\frac{x}{n^{1/\alpha}}\right).\end{eqnarray*}
Define, for $\gamma, s\geq 0$,
\begin{eqnarray}
\label{shifted_H}
&&H_{\gamma,s}(\cdot)   \ = \  H\Big( \cdot - \frac{1}{n^{1/\alpha}}\Big\lfloor\frac{2\beta\tilde{g}'(\rho)sn}{n^\gamma}\Big\rfloor\Big) \\   
&&\ \ \ \ \ \ \ \ \ \ \ \ \ {\rm and} \ \  \HW_{\gamma, s}(\cdot)  \ = \  H\Big( \cdot - \frac{1}{n^{1/\alpha}}\Big\{\frac{2\beta\tilde{g}'(\rho)sn}{n^\gamma}\Big\}\Big), \nonumber
\end{eqnarray}
functions seen in frames along $n^{-1/\alpha}\Z$ and $\R$ respectively which will be useful.

\subsubsection{Fields in a fixed frame}
We develop
\begin{eqnarray*}
L_n \Y^n_s(H) & = &  \frac{n}{2n^{1/2\alpha}}\sum_{x\in \Z}\sum_{y\in \Z}s(y) g(\eta^n_s(x)) \Delta^n_{x,y}H 
+  \frac{2n\beta}{n^{\gamma +3/2\alpha}}\sum_{x\in \Z} g(\eta^n_s(x))\nabla^n_xH.\end{eqnarray*}
Then, we have 
\begin{eqnarray*}
\M^{n}_t(H)  &:=& \ \Y^n_t(H) - \Y^n_0(H)  - \int_0^t L_n \Y^n_s(H) ds
\end{eqnarray*}
is a martingale.  In these and following calculations, we note $(\Y^n_s)^k$ and later below $F(s,\eta^n_s;H,n)^k$ for $k\geq 1$, although not local, are $L^2(\nu_\rho)$ functions which can be approximated by local ones and are in the domain of $L_n$.

Noting 
\begin{equation}
\label{s_scaling}
s(y) \ = \ \frac{1}{n^{1+1/\alpha}}s\left(\frac{y}{n^{1/\alpha}}\right),\end{equation}
we may decompose
\begin{equation}
\label{mart_decomposition_fixed}
\M^{n}_t(H) \ = \ \Y^{n}_t(H) - \Y^{n}_0(H) - \I^{n}_t(H) -\B^{n}_t(H)\end{equation}
where
\begin{eqnarray*}
\mathcal{I}^{n}_t(H) & = &
 \frac{1}{2}\int_0^t \frac{1}{n^{1/2\alpha}}\sum_{x\in \Z} \Big[\frac{1}{n^{1/\alpha}}\sum_{y\in \Z}s(y/n^{1/\alpha})\big(g(\eta^n_s(x))-\tilde{g}(\rho)\big)\Delta^n_{x,y}H\Big] ds
\\
\mathcal{B}^{n, \rightarrow}_t(H) & = & \frac{2n\beta}{n^{\gamma+3/2\alpha}}\int_0^t \sum_{x\in \Z} \big(g(\eta^n_s(x))-\tilde{g}(\rho)\big)\nabla^n_xH ds.
\end{eqnarray*}
In the last two lines, centering constants were inserted noting $\sum_x \Delta^n_{x,y} = \sum_x \nabla^n_x = 0$.

The integrand of the quadratic variation $\langle \M^{n}_t\rangle$ equals
\begin{eqnarray*}
&& L_n \big(\Y^n_s(H)\big)^2 - 2\Y^n_s(H)L_n\Y^n_s(H) \\
&&\ \ \ \ \ \ \ \ = \ \frac{1}{n^{2/\alpha}}\sum_{x\in \Z} \sum_{y\in \Z}s(y/n^{1/\alpha})g(\eta^n_s(x))(\d^n_{x,y} H)^2 \\
&&\ \ \ \ \ \ \ \ \ \ \ \ + \frac{n\beta}{n^{\gamma + 3/\alpha}}\sum_{x\in \Z} \big(g(\eta^n_s(x)) - g(\eta^n_s(x+ 1))\big) (\e^n_x H)^2.
\end{eqnarray*}
Then, $(\M^{n}_t(H))^2 - \langle \M^{n}_t(H)\rangle$ is a martingale with
\begin{eqnarray*}
\langle \M^{n}_t(H) \rangle & = & \int_0^t  \frac{1}{n^{2/\alpha}}\sum_{x\in \Z}\sum_{y\in \Z}s(y/n^{1/\alpha})g(\eta^n_s(x)) (\d^n_{x,y} H)^2ds\\
&&\ \ \ \ \ \ \ \ \ + \int_0^t \frac{n\beta}{n^{\gamma + 3/\alpha}}\sum_{x\in \Z} \big(g(\eta^n_s(x)) - g(\eta^n_s(x+1))\big) (\e^n_x H)^2 ds.\end{eqnarray*}

Since we start from the product measure $\nu_\rho$, by stationarity and Burkholder-Davis-Gundy inequality ($E[\M^4(t)] \leq E[\langle \M(t)\rangle^2]$), we have, given $0<\alpha<2$, the second term in the quadratic variation above is negligible.  We have
\begin{eqnarray*}
\E_{\nu_\rho}\big[\big(\M^{n}_t(H)-\M^{n}_s(H)\big)^4\big]  \ \leq \ C(\beta, g,H)
|t-s|^2. 
\end{eqnarray*}

\subsubsection{Fields in a moving frame}

Let $F(s, \eta^n_s;H,n) = \Y^{n, \rightarrow}_s(H)$
and write, as before in the fixed frame,
\begin{eqnarray*}
L_nF(s, \eta^n_s; H,n) & = & \frac{n}{2n^{1/2\alpha}}\sum_{x\in \Z}\sum_{y\in \Z}s(y) g(\eta^n_s(x)) \Delta^n_{x,y}\HW_{\gamma, s} \\
&&\ \ \ \ \ \ \ + \ \frac{2n\beta}{n^{\gamma +3/2\alpha}}\sum_{x\in \Z} g(\eta^n_s(x))\nabla^n_x\HW_{\gamma, s}.\end{eqnarray*}
Also,
\begin{eqnarray*}
\frac{\partial}{\partial_s}F(s,\eta^n_s; H,n) & = & \Big\{\frac{-2\beta\tilde{g}'(\rho)n}{n^\gamma}\Big\} \frac{1}{n^{3/2\alpha}}\sum_{x\in \Z} \nabla \HW_{\gamma, s}\left(\frac{x}{n}\right)\left(\eta^n_s(x)-\rho\right).
\end{eqnarray*}
  Then,
\begin{eqnarray*}
\M^{n, \rightarrow}_t(H)  &:=& \ F(t, \eta^n_t; H,n) - F(0, \eta^n_0; H,n) \\
&&\ \ \ \ \ \ \ \ \ \ - \int_0^t \frac{\partial}{\partial_s}F(s,\eta^n_s; H,n) + L_n F(s, \eta^n_s; H,n) ds
\end{eqnarray*}
is a martingale.

Making use of \eqref{s_scaling}, we write
\begin{equation}
\label{mart_decomposition}
\M^{n, \rightarrow}_t(H) \ = \ \Y^{n, \rightarrow}_t(H) - \Y^{n, \rightarrow}_0(H) - \I^{n, \rightarrow}_t(H) -\B^{n, \rightarrow}_t(H) -\K^{n, \rightarrow}_t(H)\end{equation}
where
\begin{eqnarray*}
\mathcal{I}^{n, \rightarrow}_t(H) & = &
 \frac{1}{2}\int_0^t \frac{1}{n^{1/2\alpha}}\sum_{x\in \Z} \Big[\frac{1}{n^{1/\alpha}}\sum_{y\in \Z}s(y/n^{1/\alpha})\big(g(\eta^n_s(x))-\tilde{g}(\rho)\big)\Delta^n_{x,y}H_{\gamma,s}\Big] ds
\\
\mathcal{B}^{n, \rightarrow}_t(H) & = & \frac{2n\beta}{n^{\gamma+3/2\alpha}}\int_0^t \sum_{x\in \Z} \big(g(\eta^n_s(x))-\tilde{g}(\rho) - \tilde{g}'(\rho)(\eta^n_s(x)-\rho)\big)\nabla^n_xH_{\gamma, s} ds\\
\K^{n, \rightarrow}_t(H) &=& \int_0^t\Big[\frac{1}{n^{1/2\alpha}}\sum_{x\in \Z}\Big[\frac{1}{n^{1/\alpha}}\sum_{y\in \Z}s(y/n^{1/\alpha})\kappa^{n,1}_{x,y}(H,s)\big(g(\eta^n_s(x))-\tilde{g}(\rho)\big)\Big]\\ 
&& + \frac{2n\beta}{n^{\gamma +3/2\alpha}}\sum_{x\in \Z}\kappa^{n,2}_x(H,s)\big(g(\eta^n_s(x))-\tilde{g}(\rho) - \tilde{g}'(\rho)(\eta^n_s(x)-\rho)\big)\Big]ds.
\end{eqnarray*}
Here, as $\sum_x\Delta^n_{x,y}H_{\gamma,s} = \sum_x\nabla^n_xH_{\gamma,s}=0$, centering constants were introduced in $\I^{n, \rightarrow}_t$ and $\B^{n, \rightarrow}_t$.  By Taylor expansion,
\begin{eqnarray*}
&&\kappa^{n,1}_{x,y}(H,s) \ = \ \Delta^n_{x,y} \big(\HW_{\gamma,s} - H_{\gamma,s}\big)\nonumber\\
&&\ \   = \ O(n^{-1/\alpha})\cdot \Delta^n_{x,y} H'_{\gamma,s}\nonumber\\
&&\ \ \  + O(n^{-2/\alpha})\cdot \Big[H^{(4)}_{\gamma,s}((x+y+z_1)/n^{1/\alpha})\nonumber\\
&&\ \ \ \ \ \ \ \ \  + H^{(4)}_{\gamma,s}((x-y+z_2)/n^{1/\alpha}) + 2H^{(4)}_{\gamma,s}((x+z_3)/n^{1/\alpha})\Big] \nonumber
\end{eqnarray*}
and
\begin{eqnarray*}
\kappa^{n,2}_x(H,s) &=& 
O( n^{-1/\alpha})\cdot \Delta H_{\gamma,s}(x/n^{1/\alpha}) + O(n^{-2/\alpha})\cdot H'''_{\gamma,s}(z_4/n)
\label{K_bound}
\end{eqnarray*}
where $|z_k|\leq 1$ for $1\leq k\leq 4$.

As in the fixed frame calculation, 
$(\M^{n, \rightarrow}_t(H))^2 - \langle \M^{n, \rightarrow}_t(H)\rangle$ is a martingale with
\begin{eqnarray*}
\langle \M^{n, \rightarrow}_t(H) \rangle & = & \int_0^t  \frac{1}{n^{2/\alpha}}\sum_{x\in \Z}\sum_{y\in \Z}s(y/n^{1/\alpha})g(\eta^n_s(x)) (\d^n_{x,y} \HW_{\gamma,s})^2ds\\
&&\ \ \ \ \  + \int_0^t \frac{n\beta}{n^{\gamma + 3/\alpha}}\sum_{x\in \Z} \big[g(\eta^n_s(x)) - g(\eta^n_s(x+1))\big] (\e^n_x \HW_{\gamma,s})^2 ds.\end{eqnarray*}

Also, we have the bound, as in the fixed frame, 
\begin{eqnarray}
\label{quad_var_bound}
\E_{\nu_\rho}\big[\big(\M^{n, \rightarrow}_t(H)-\M^{n, \rightarrow}_s(H)\big)^4\big]  \ \leq \ C(\beta, g,H)
|t-s|^2. 
\end{eqnarray}

\subsection{Boltzmann-Gibbs principle} \label{BG_statement}

We will need to approximate terms in the stochastic differential of $\Y^{n, \rightarrow}_t$ in order to close and recover limiting equations.  The main tool for this approximation is the `Boltzmann-Gibbs principle'.
Define
$$\big(\eta^n_s)^{(\ell)}(x) \ := \ \frac{1}{2\ell+1}\sum_{y\in \Lambda_\ell}\eta^n_s(x+y)$$
and $\sigma^2_\ell(\rho) = E_{\nu_\rho} [(\eta^{(\ell)}(0) - \rho)^2]$ for $\ell\geq 1$.

\begin{proposition}
 \label{gbg_L2}

Suppose $0<\alpha<2$.  Let $f$ be a local $L^5(\nu_\rho)$ function supported on sites $\Lambda_{\ell_0}$ such that $\tilde{f}(\rho) =\tilde{f}'(\rho)=0$.
There exists a constant $C=C(\rho, \ell_0)$
such that, for $t\geq 0$, $\ell\geq \ell^3_0$ and $h\in \ell^1(\Z)\cap \ell^2(\mathbb{Z})$,
\begin{align*}
& \bb E_{\nu_\rho}\Big[ \sup_{0\leq t\leq K}\Big( \int_0^t  \sum_{x\in{\mathbb{Z}}} \Big(\tau_x f(\eta^n_s)
   - \frac{\tilde{f}''(\rho)}{2} \Big\{\Big(\big(\eta^n_s)^{(\ell)}(x)-\rho\Big)^2-\frac{\sigma^2_\ell(\rho)}{2\ell+1} \Big\}\Big) h(x)ds\Big)^2 \Big]\\
&\ \ \ \ \ \ \ 
    \leq \ C\|f\|^2_{L^5(\nu_\rho)}\bigg(\frac{K \ell^{\alpha -1}}{n^{1-1/\alpha}}\Big(\frac{1}{n^{1/\alpha}}\sum_{x\in \Z}h^2(x)\Big) + \frac{K^2n^{2/\alpha}}{\ell^{3}}\Big(\frac{1}{n^{1/\alpha}}\sum_{x\in{\mathbb{Z}}}|h(x)|\Big)^2\bigg).
\end{align*}

On the other hand, when only $\tilde{f}(\rho)=0$ is known,
\begin{align*}
& \bb E_{\nu_\rho}\Big[ \sup_{0\leq t\leq K}\Big( \int_0^t  \sum_{x\in{\mathbb{Z}}} \Big(\tau_x f(\eta^n_s) - \tilde{f}'(\rho)\Big\{\big(\eta^n_s\big)^{(\ell)}(x) - \rho\Big\} h(x)ds\Big)^2\Big]\\
&\ \ \ \ \ \ \ \ 
    \leq \ C\|f\|^2_{L^5(\nu_\rho)}\bigg(\frac{K \ell^\alpha}{n^{1-1/\alpha}}\Big(\frac{1}{n^{1/\alpha}}\sum_{x\in \Z}h^2(x)\Big) + \frac{K^2n^{2/\alpha}}{\ell^{2}}\Big(\frac{1}{n^{1/\alpha}}\sum_{x\in{\mathbb{Z}}}|h(x)|\Big)^2\bigg).
\end{align*}
\end{proposition}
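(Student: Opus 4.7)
The plan is to follow the Kipnis--Varadhan-based scheme for Boltzmann--Gibbs estimates (cf.\ \cite{gjs}), adapted to the long-range setting. The idea is to split the replacement error into a one-block piece, handled by a Kipnis--Varadhan / $H^{-1}$ estimate against the symmetric part of $L_n$, and a Taylor remainder, handled by a direct second-moment bound. I focus on the first inequality; the second follows the same scheme with a lower-order Taylor expansion.

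First, decompose $\tau_x f(\eta) = g_x(\eta) + \bb E_{\nu_\rho}[\tau_x f \,|\, \eta^{(\ell)}(x)]$ with $g_x := \tau_x f - \bb E_{\nu_\rho}[\tau_x f \,|\, \eta^{(\ell)}(x)]$. By equivalence of ensembles, $\bb E_{\nu_\rho}[\tau_x f \,|\, \eta^{(\ell)}(x)] = \tilde f(\eta^{(\ell)}(x)) + O(1/\ell)$; a second-order Taylor expansion of $\tilde f$ around $\rho$ using $\tilde f(\rho) = \tilde f'(\rho) = 0$ then produces the quadratic expression in the statement plus a cubic remainder $R_x(\eta)$ in $\eta^{(\ell)}(x) - \rho$ whose coefficients are bounded in terms of $\|f\|_{L^5(\nu_\rho)}$ (the fifth-power norm enters to absorb both the equivalence-of-ensembles error and the higher moments of the block averages).

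For the one-block piece $V_1 := \sum_x g_x(\eta)\, h(x)$, the Kipnis--Varadhan inequality for the $\nu_\rho$-stationary process gives
\[
\bb E_{\nu_\rho}\Big[\sup_{0\leq t\leq K}\Big(\int_0^t V_1(\eta^n_s)\,ds\Big)^{\!2}\Big] \;\leq\; C K \|V_1\|_{-1,n}^2,
\]
where $\|V_1\|_{-1,n}^2 = \sup_\phi\{2\bb E_{\nu_\rho}[V_1\phi] - \mc E_n(\phi)\}$ and $\mc E_n(\phi) = \tfrac{n}{2}\sum_{x,y\in\Z} s(y-x)\, \bb E_{\nu_\rho}[g(\eta(x))\,(\phi(\eta^{x,y})-\phi(\eta))^2]$ is the Dirichlet form of the symmetric part of $L_n$. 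Since $g_x$ is conditionally mean-zero given $\eta^{(\ell)}(x)$, Cauchy--Schwarz combined with the Poincar\'e inequality on the canonical measure of the box $\Lambda_\ell+x$ (with constant $W(N,\ell)$), together with (SG) (giving $\bb E_{\nu_\rho}[W(N,\ell)^2] \leq C\ell^{2\alpha}$), yields $\|V_1\|_{-1,n}^2 \leq C\|f\|_{L^5(\nu_\rho)}^2 \tfrac{\ell^{\alpha-1}}{n}\sum_x h(x)^2$ after summing in $x$ and absorbing the Poincar\'e contributions into $\mc E_n(\phi)$ (the boxes $\Lambda_\ell+x$ cover each bond of length at most $2\ell$ at most $O(\ell)$-times). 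Rearranging the $n^{1/\alpha}$ factors reproduces the first summand of the bound. For the Taylor remainder $V_2 := \sum_x R_x(\eta)\, h(x)$, stationarity plus Cauchy--Schwarz in time give $\bb E_{\nu_\rho}[\sup_{t\leq K}(\int_0^t V_2\, ds)^2] \leq K^2\, \bb E_{\nu_\rho}[V_2^2]$, and a double-sum Cauchy--Schwarz bounds $\bb E_{\nu_\rho}[V_2^2] \leq (\sum_x|h(x)|)^2 \max_{x,y}|\bb E_{\nu_\rho}[R_x R_y]|$, which is of order $\|f\|^2_{L^5(\nu_\rho)}/\ell^3$ by the sixth-moment estimate $\bb E_{\nu_\rho}[(\eta^{(\ell)}(0)-\rho)^6]\leq C/\ell^3$, matching the second summand. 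The second inequality follows identically with a first-order Taylor expansion: now $R_x$ is quadratic, the controlling moment is the fourth, $\bb E_{\nu_\rho}[(\eta^{(\ell)}(0)-\rho)^4] \leq C/\ell^2$, and the weaker cancellation of $g_x$ upgrades $\ell^{\alpha-1}$ to $\ell^{\alpha}$ in the $H^{-1}$ step.

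The main obstacle I anticipate is the $H^{-1}$ variational computation in the long-range setting. Because $h$ need only lie in $\ell^1(\Z)\cap\ell^2(\Z)$, the localizing boxes $\Lambda_\ell+x$ overlap heavily as $x$ ranges over $\Z$, and one must verify that $\mc E_n(\phi)$ absorbs the Poincar\'e contributions without overcounting by more than a factor $\ell$; the fact that the long-range rates $s(y-x)$ weight all bonds positively is what makes this possible. In parallel, (SG) controls only $\bb E_{\nu_\rho}[W(N,\ell)^2]$, so the correct Cauchy--Schwarz pairing of $W(N,\ell)$ with the localized $L^2$-mass of $g_x$ (itself estimated via $\|f\|_{L^5(\nu_\rho)}$) must be chosen to extract the $\ell^{\alpha-1}$ (or $\ell^{\alpha}$) factor cleanly. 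Tracking these factors together with the $1/n^{1/\alpha}$ rescaling that converts $\sum_x$ into Riemann-like sums is what finally reconstitutes the claimed combination of exponents.
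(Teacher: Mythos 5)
Your overall architecture --- the Kipnis--Varadhan $H_{-1}$ bound with the supremum inside the expectation (as in \cite{CLO}, \cite{SVY}), equivalence of ensembles plus Taylor expansion for the final replacement, and a crude $L^2$/Cauchy--Schwarz-in-time bound producing the $K^2n^{2/\alpha}\ell^{-3}$ (resp.\ $\ell^{-2}$) term --- is the right frame, and it matches what the paper has in mind (the paper's own justification simply defers to the proof of Theorem 3.2 of \cite{gjs}, rerun in the $\alpha$-dependent scales). But there is a genuine gap in the central step: the one-shot decomposition $\tau_x f = g_x + E_{\nu_\rho}[\tau_x f\,|\,\eta^{(\ell)}(x)]$, with $\ell$ the final (large) scale, cannot produce the factor $\ell^{\alpha-1}$ (or $\ell^{\alpha}$ in the first-order statement). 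In the variational computation you describe, three factors multiply: the overlap factor $O(\ell)$ from covering each bond by the boxes $\Lambda_\ell+x$, the Poincar\'e/(SG) factor $O(\ell^{\alpha})$, and the conditional variance of the function being averaged. Since $g_x=\tau_x f-E_{\nu_\rho}[\tau_x f\,|\,\eta^{(\ell)}(x)]$ has conditional variance of order $\|f\|^2$ --- it does \emph{not} decay in $\ell$ --- the product is, schematically, $O(\ell^{\alpha+1})$ per site, a loss of order $\ell^{2}$ against the claimed bound. No choice of Cauchy--Schwarz pairing with $W(N,\ell)$ repairs this, because the smallness must come from the variance of the replaced function, not from the Poincar\'e constant.

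The missing idea is the renormalization over doubling scales that is the actual content of the (second-order) Boltzmann--Gibbs principle in \cite{GJ} and \cite{gjs}: one first replaces $\tau_x f$ by its conditional expectation on the minimal box $\Lambda_{\ell_0}$ (a cost depending only on $\ell_0$, absorbed into $C(\rho,\ell_0)$), and then passes from scale $\ell_i$ to $2\ell_i$ along a dyadic sequence up to $\ell$. At each step the function being averaged is the \emph{difference of conditional expectations at consecutive scales}, whose variance is $O(\ell_i^{-2})$ when $\tilde f(\rho)=\tilde f'(\rho)=0$ (resp.\ $O(\ell_i^{-1})$ when only $\tilde f(\rho)=0$), so the per-step cost is of order $\ell_i\cdot\ell_i^{\alpha}\cdot\ell_i^{-2}=\ell_i^{\alpha-1}$ (resp.\ $\ell_i^{\alpha}$), and summing the dyadic series yields the stated $\ell^{\alpha-1}$ (resp.\ $\ell^{\alpha}$) coefficient. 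Your treatment of the Taylor/equivalence-of-ensembles remainder and the moment bounds $E_{\nu_\rho}[(\eta^{(\ell)}(0)-\rho)^6]\leq C\ell^{-3}$, $E_{\nu_\rho}[(\eta^{(\ell)}(0)-\rho)^4]\leq C\ell^{-2}$ for the last replacement is consistent with the stated second summands.
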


We remark Proposition \ref{gbg_L2} is an improvement of Theorem 3.2 in \cite{gjs}, which applies to zero-range processes and did not have the supremum `$\sup_{0\leq t\leq \tau}$' inside the expectation.  However the proof of Proposition \ref{gbg_L2} follows straightforwardly from the proof of Theorem 3.2 in \cite{gjs}, noting the following two comments:  

(1) The first step of the proof of Theorem 3.2 in \cite{gjs} is to use the $H_{-1}$-norm inequality, stated as Proposition 4.2 in \cite{gjs},
$$\E_{\nu_\rho}\Big[ \Big(\int_0^t f(\eta^n(s))ds\Big)^2\Big] \ \leq \ 20t\|f\|^2_{-1,n},$$
where $f$ is a local function with mean-zero $\E_{\nu_\rho}[f]=0$, and $\|f\|_{-1,n}$ is its $H_{-1}$ norm.  However, the same inequality is true if one introduces a supremum inside the expectation; that is,
$$\E_{\nu_\rho}\Big[\sup_{0\leq t\leq K} \Big(\int_0^t f(\eta^n(s))ds\Big)^2\Big] \ \leq \ 20K \|f\|^2_{-1,n};$$
see Lemma 4.3 in \cite{CLO} or Theorem 2.2 in \cite{SVY}.

(2) The proof in \cite{gjs}, in diffusive scale, now goes on to compute various $H_{-1}$-norms leading to the right-hand side.  Although in the present context, the long-range dynamics introduces an $\alpha$-dependent space-time and localized spectral gap scalings, straightforwardly applying the argument in \cite{gjs} in these scales, Proposition \ref{gbg_L2} is recovered.

\subsection{Tightness}
\label{tightness}
We now prove tightness of the fluctuation fields in Theorem \ref{secondorder_thm}, using this Boltzmann-Gibbs principle.

\begin{proposition}
\label{stationary_tightness}
Starting from $\nu_\rho$, with respect to the range of parameters in Theorem \ref{secondorder_thm}, the sequences $\{\Y^{n, \rightarrow}_t: t\in [0,T]\}_{n\geq 1}$, $\{\M^{n, \rightarrow}_t: t\in [0,T]\}_{n\geq 1}$, $\{\I^{n, \rightarrow}_t: t\in [0,T]\}_{n\geq 1}$, $\{\B^{n, \rightarrow}_t: t\in [0,T]\}_{n\geq 1}$, $\{\K^{n, \rightarrow}_t: t\in [0,T]\}$ and $\{\langle \M^{n, \rightarrow}_t\rangle: t\in [0,T]\}_{n\geq 1}$ are tight in the uniform topology on $D([0,T], \S'(\R))$.
\end{proposition}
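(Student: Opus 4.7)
\medskip
\noindent\textbf{Plan.} The plan is to apply Mitoma's criterion: tightness in the uniform topology of $D([0,T],\S'(\R))$ reduces to tightness in $D([0,T],\R)$ of the real-valued processes obtained by testing against each fixed $H \in \S(\R)$. For these one-dimensional processes I would verify Aldous's criterion by means of Kolmogorov--Chentsov-type moment-increment bounds of the form $\E[(X^n_t-X^n_s)^p]\le C|t-s|^q$ with $q>1$ (which additionally forces any limit to be continuous, so the Skorokhod and uniform topologies agree on the limit), and then assemble $\Y^{n,\rightarrow}$ from its pieces via the decomposition \eqref{mart_decomposition}.

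The martingale $\M^{n,\rightarrow}_\cdot(H)$ is handled immediately by \eqref{quad_var_bound}: $\E_{\nu_\rho}[(\M^{n,\rightarrow}_t(H) - \M^{n,\rightarrow}_s(H))^4] \leq C(H)|t-s|^2$ is exactly of Kolmogorov--Chentsov type. For the quadratic variation $\langle \M^{n,\rightarrow}\rangle_\cdot(H)$, the integrand displayed in Subsection \ref{Assoc_mart_section} is, by stationarity under $\nu_\rho$ and (LIP), uniformly $L^1(\nu_\rho)$-bounded at each $s$, so $\E[\langle \M^{n,\rightarrow}\rangle_t(H) - \langle \M^{n,\rightarrow}\rangle_s(H)] \leq C(H)|t-s|$; being nondecreasing, tightness follows from Aldous.

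The heart of the argument is $\I^{n,\rightarrow}$ and $\B^{n,\rightarrow}$, handled by the Boltzmann--Gibbs principle (Proposition \ref{gbg_L2}). Rewrite $\I^{n,\rightarrow}_t(H) = \int_0^t \sum_x h^n_s(x)(g(\eta^n_s(x)) - \tilde g(\rho))\,ds$ with $h^n_s(x) = \tfrac{1}{2n^{1/2\alpha}} \tfrac{1}{n^{1/\alpha}} \sum_y s(y/n^{1/\alpha}) \Delta^n_{x,y} H_{\gamma,s}$, essentially $n^{-1/2\alpha}$ times an $\alpha$-fractional discrete Laplacian of $H_{\gamma,s}$; a Riemann-sum check shows $n^{-1/\alpha}\|h^n_s\|_{\ell^2}^2$ and $n^{-1/\alpha}\|h^n_s\|_{\ell^1}$ are controlled uniformly in $n,s$ by Schwartz seminorms of $H$. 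Apply the second inequality of Proposition \ref{gbg_L2} to $f = g - \tilde g(\rho)$ (so $\tilde f(\rho)=0$), choosing $\ell = \ell(n)\to \infty$ to balance the two right-hand-side terms; this replaces $g(\eta^n_s(x)) - \tilde g(\rho)$ by $\tilde g'(\rho)((\eta^n_s)^{(\ell)}(x) - \rho)$ up to an $L^2$ error tending to zero. The linearized remainder is controlled by Cauchy--Schwarz and stationarity, giving $\E[(\cdot_t - \cdot_s)^2] \leq C(H)|t-s|^2$ and hence tightness. For $\B^{n,\rightarrow}$ the integrand already satisfies $\tilde f(\rho) = \tilde f'(\rho) = 0$, so I invoke the first (quadratic) inequality of Proposition \ref{gbg_L2}; in the critical scaling $\gamma = 1-3/2\alpha$ the prefactor $2n\beta/n^{\gamma+3/2\alpha}$ is precisely tuned so that the optimized bound produces a finite quadratic contribution (an $\A^\varepsilon$-type functional with $\varepsilon\sim \ell/n^{1/\alpha}$), whose increments are again $O(|t-s|^2)$ in $L^2$ by stationarity.

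The remainder $\K^{n,\rightarrow}$ is negligible: the Taylor bounds following \eqref{mart_decomposition} give $\kappa^{n,1}_{x,y}, \kappa^{n,2}_x = O(n^{-1/\alpha})$, so $\sup_{t\leq T}|\K^{n,\rightarrow}_t(H)|$ is at most $n^{-1/\alpha}$ times an expression of the same form as $\I^{n,\rightarrow}+\B^{n,\rightarrow}$ and vanishes in probability. Tightness of $\Y^{n,\rightarrow}$ then follows from \eqref{mart_decomposition} together with tightness of $\Y^{n,\rightarrow}_0$, which converges in law to a spatial white noise by the classical CLT for the product measure $\nu_\rho$. The main obstacle is the critical regime $\alpha\in[3/2,2)$, $\gamma = 1-3/2\alpha$: there the quadratic Boltzmann--Gibbs bound sits exactly at the threshold at which it yields an $O(1)$ rather than vanishing contribution, and one must choose the coarse-graining scale $\ell = \ell(n)$ carefully so that the two competing error terms $K\ell^{\alpha-1}/n^{1-1/\alpha}$ and $K^2 n^{2/\alpha}/\ell^3$ in Proposition \ref{gbg_L2} balance without either blowing up.
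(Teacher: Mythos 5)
Your overall architecture --- Mitoma's criterion, the decomposition \eqref{mart_decomposition}, the fourth-moment bound \eqref{quad_var_bound} for the martingale, and the Boltzmann--Gibbs principle for the drift term $\B^{n,\rightarrow}$ --- matches the paper's proof. Your variations are harmless where they occur: for $\I^{n,\rightarrow}$ the paper does not invoke Proposition \ref{gbg_L2} at all but simply squares, uses Cauchy--Schwarz in time and independence under the product measure $\nu_\rho$ to get $\E_{\nu_\rho}[(\I^{n,\rightarrow}_t(H))^2]\le C(\rho,g,H,\alpha)t^2$ directly (your detour through the linearized Boltzmann--Gibbs estimate also works, just less economically), and your monotonicity argument for $\langle\M^{n,\rightarrow}\rangle$ is a fine substitute for the paper's second-moment computation.

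There is, however, a genuine gap in your treatment of $\B^{n,\rightarrow}$. The parameter range of Theorem \ref{secondorder_thm} includes $0<\alpha<3/2$ with \emph{arbitrary} $\gamma\ge 0$, and your moment-increment route fails there. Optimizing the two competing error terms of Proposition \ref{gbg_L2} over $\ell$ (the paper takes $\ell=t^{1/(\alpha+1)}n^{1/\alpha}$) yields $\E_{\nu_\rho}[(\B^{n,\rightarrow}_t(H))^2]\le Ct^{2\alpha/(\alpha+1)}$ --- not the $O(|t-s|^2)$ you assert; for $\alpha\in[3/2,2)$ the exponent is only between $6/5$ and $4/3$, which still exceeds $1$ and so Kolmogorov--Centsov applies. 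The genuine obstacle is therefore not the critical regime $\alpha\in[3/2,2)$ you single out, but the regime $0<\alpha\le 1$, where $2\alpha/(\alpha+1)\le 1$ and no Kolmogorov--Centsov-type criterion is available. This is precisely why Proposition \ref{gbg_L2} is stated with the supremum \emph{inside} the expectation, a feature your proposal never uses. The paper closes this case by partitioning $[0,T]$ into $O(\delta^{-1})$ blocks, using stationarity and $\B^{n,\rightarrow}_0(H)=0$ to reduce the modulus of continuity to $\P_{\nu_\rho}(\sup_{0\le t\le\delta}|\B^{n,\rightarrow}_t(H)|>\varepsilon)$, applying Chebyshev with the sup-moment bound \eqref{BG_tightness}, and choosing $\ell=n^{1/\alpha}$ so that the resulting bound \eqref{tightness_help} vanishes as $n\uparrow\infty$ (since $2\gamma+3/\alpha-2>0$ for every $\gamma\ge0$ when $\alpha<3/2$). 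A further small omission: when $t$ is so small that $\ell=t^{1/(\alpha+1)}n^{1/\alpha}\le 1$ the coarse-graining is vacuous, and one needs the separate direct variance bound $\E_{\nu_\rho}[(\B^{n,\rightarrow}_t(H))^2]\le Ct^{(2\alpha+1)/(\alpha+1)}$ that the paper supplies.
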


\begin{proof}
 By Mitoma's criterion \cite{Mitoma}, for each $H\in \S(\R)$,
it is enough to show tightness of $\{\Y^{n, \rightarrow}_t(H); t\in [0,T]\}_{n\geq 1}$, $\{\M^{n, \rightarrow}_t(H): t\in [0,T]\}_{n\geq 1}$, $\{\I^{n, \rightarrow}_t(H): t\in [0,T]\}_{n\geq 1}$, $\{\B^{n, \rightarrow}_t(H): t\in [0,T]\}_{n\geq 1}$, $\{\K^{n, \rightarrow}_t(H): t\in [0,T]\}$ and $\{\langle \M^{n, \rightarrow}_t(H)\rangle: t\in [0,T]\}_{n\geq 1}$ in the uniform topology.  Note that all initial values vanish, except $\Y^{n, \rightarrow}_0(H)$. 

Tightness of $$\Y^{n, \rightarrow}_t(H)\ = \ \Y^{n, \rightarrow}_0(H) + \I^{n, \rightarrow}_t(H)  + \B^{n, \rightarrow}_t(H) + \K^{n, \rightarrow}_t(H)+\M^{n, \rightarrow}_t(H),$$
 is accomplished by showing each term is tight.  All initial values of the constituents vanish except $\Y^{n, \rightarrow}_0(H)$, which is tight as it converges weakly to a Gaussian random variable given that we start from $\nu_\rho$.

Tightness of the martingale term follows from Doob's inequality:
\begin{eqnarray*}
&&\P_{\nu_\rho}\Big( \sup_{\stackrel{|t-s|\leq \delta}{0\leq s,t\leq T}} |\M^{n, \rightarrow}_t(H) - \M^{n, \rightarrow}_s(H)|>\varepsilon\Big)\\
&& \ \ \ \ \ \ \leq \ 
 C \varepsilon^{-4}\delta^{-1} \E_{\nu_\rho}\big[ \big(\M_\delta^{n, \rightarrow}(H)\big)^4\big] \ \leq \ C(\beta, \varepsilon, g, H)\delta.
\end{eqnarray*}

The proof of tightness for $\B^{n, \rightarrow}_t(H)$ makes use of the 
the Boltzmann-Gibbs Proposition \ref{gbg_L2}.
Since $V(\eta(x)) = g(\eta(x)) -\tilde{g}(\rho) - \tilde{g}'(\rho)(\eta(x) - \rho)$ is a function of single site, $\ell_0=1$ in the application of Proposition \ref{gbg_L2}.  One obtains for $\ell> 1$ that
\begin{equation}
\label{BG_tightness}\E_{\nu_\rho}\Big[\sup_{0\leq s\leq t}(\B_s^{n, \rightarrow}(H))^2\Big] \ \leq \ \frac{C(\rho, \beta, g, H)}{n^{2\gamma +3/\alpha -2}}\Big\{\frac{t\ell^{\alpha -1}}{n^{1-1/\alpha}} + \frac{t^2n^{2/\alpha}}{\ell^3} + \frac{t^2 n^{2/\alpha}}{\ell^2}\Big\}.\end{equation}

Indeed, note $\tilde{V}(\rho) = \tilde{V}'(\rho) = 0$ and $\tilde{V}''(\rho) = \tilde{g}''(\rho)$.  Writing $\B^{n, \rightarrow}_t(H) = 2\beta n^{1-\gamma-3/2\alpha}\int_0^t \sum_x \big(\nabla^n_x H_{\gamma,s}\big) \tau_x V(\eta_s)ds$, by translation-invariance of $\nu_\rho$, we may replace $\nabla^n_x H_{\gamma,s}$ by $\nabla^n_xH$ to estimate
\begin{eqnarray*}
&&\E_{\nu_\rho} \Big[\Big(\B^{n, \rightarrow}_t(H)\\
&&\ \ \ \ \  - \frac{2\beta \tilde{g}''(\rho) n}{n^{\gamma +3/2\alpha}}\int_0^t \sum_{x\in \Z} (\nabla^n_x H_{\gamma,s})\Big\{\Big(\big(\eta_s^n\big)^{(\ell)}(x) - \rho\Big)^2 - \frac{\sigma^2_\ell(\rho)}{2\ell +1}\Big\} ds\Big)^2\Big] \nonumber\\
&&  \leq \ \frac{C(\rho,\beta,g)}{n^{2\gamma + 3/\alpha -2}} \Big\{ \frac{t\ell^{\alpha -1}}{n^{1-1/\alpha}} + \frac{t^2n^{2/\alpha}}{\ell^{3}}\Big\}\\
&&\ \ \ \ \ \ \times \Big[\Big(\frac{1}{n^{1/\alpha}}\sum_{x\in \Z} (\nabla^n_xH)^2\Big) + \Big(\frac{1}{n^{1/\alpha}}\sum_{x\in\Z}|\nabla^n_xH|\Big)^2\Big].\end{eqnarray*}
However, noting the fourth moment, $E_{\nu_\rho}[(\eta^{(\ell)}(0) - \rho)^4] \leq C\ell^{-2}$, which after squaring the integral gives the third term on the right-side of \eqref{BG_tightness}.

\medskip
For the sequence in Theorem \ref{secondorder_thm}, when $3/2\leq\alpha<2$ and $\gamma = 1-3/2\alpha$, or  $1<\alpha<3/2$, choose $\ell = t^{1/(\alpha + 1)}n^{1/\alpha}>1$.  One has $\E_{\nu_\rho}\big[(\B_t^{n, \rightarrow}(H))^2\big] \leq Ct^{2\alpha/(\alpha+1)}$  where the exponent $2\alpha/(\alpha +1)>1$.  
However, when $\ell = t^{1/(\alpha+1)}n^{1/\alpha}\leq 1$, by squaring, taking expectation and using independence under $\nu_\rho$, one gets a similar bound:
$$\E_{\nu_\rho}\big[(\B^{n, \rightarrow}_t(H))^2\big] \leq C(\rho, \beta, g) t^2n^{1/\alpha}\big[\frac{1}{n^{1/\alpha}}\sum_x |\nabla^n_x H|^2\big] \leq C(\rho, \beta, g, H)t^{\frac{2\alpha +1}{\alpha + 1}}.$$

One may now apply the Kolmogorov-Centsov criterion and stationarity of $\nu_\rho$ to obtain tightness of $\B^{n,\rightarrow}_t$ in these cases.

However, when $0<\alpha\leq 1$, since the exponent $2\alpha/(\alpha+1)\leq 1$, we use the following argument.
 By stationarity of $\nu_\rho$, the standard technique of dividing into subintervals of size $\delta^{-1}$, $\B^{n,\rightarrow}_0(H)=0$, and \eqref{BG_tightness}, for $\ell>1$, we obtain
\begin{eqnarray}
\label{tightness_help}
&&\P_{\nu_\rho}\left( \sup_{\substack{|s-t|\leq \delta\\0\leq s,t\leq T}} \big|\B^{n,\rightarrow}_t(H) - \B^{n,\rightarrow}_s(H)\big|>
\varepsilon\right) \ \leq \ \frac{3T}{\delta}\P_{\nu_\rho}\left( \sup_{0\leq t\leq \delta} \big|\B^{n,\rightarrow}_t(H)\big|>
\varepsilon\right) \nonumber\\
&& \ \ \ \leq \ \frac{3T\delta^{-1}}{\varepsilon^2} \E_{\nu_\rho}\left[\sup_{0\leq t\leq \delta} \big|\B^{n,\rightarrow}_t(H)\big|^2\right]
\ \leq \  \frac{C}{n^{2\gamma +\frac{3}{\alpha} -2}}\Big\{\frac{\ell^{\alpha -1}}{n^{1-\frac{1}{\alpha}}} + \frac{\delta n^{2/\alpha}}{\ell^2}\Big\}.
\end{eqnarray}
Choosing now $\ell = n^{1/\alpha}$, we see \eqref{tightness_help} vanishes as $n\uparrow\infty$. 

The tightness arguments for $\I^{n, \rightarrow}_t(H)$, $\langle \M^{n, \rightarrow}_t(H)\rangle$ and $\K^{n, \rightarrow}_t(H)$ are simpler and follow by squaring all terms, using independence and stationarity under $\nu_\rho$, and Kolmogorov-Centsov criterion.
For instance,
\begin{eqnarray*}
\E_{\nu_\rho} \big[(\I^{n, \rightarrow}_t)^2\big] & \leq & C(\rho,g)t^2\Big\{\frac{1}{n^{1/\alpha}}\sum_x \Big[\frac{1}{n^{1/\alpha}}\sum_y s(y/n^{1/\alpha}) \Delta^n_{x,y}H\Big]^2\Big\} \\
& \leq &C(\rho, g, H, \alpha)t^2.\end{eqnarray*}
\end{proof}

\subsection{Generalized domains}
\label{generalized_domain}
The goal of this section is to state that terms in \eqref{terms} have definition and are well-defined.  Recall the discussion after Proposition \ref{symmetric_prop}.  We now recall some notions from \cite{Dawson_Gorostiza1} and \cite{Dawson_Gorostiza}.
For $p>0$, define the Banach space 
$$C_{p,0}(\R) \ = \ \Big\{\phi\in C(\R): \phi/\phi_p\in C_0(\R)\Big\},$$
$\phi_p(x) = (1+|x|^2)^{-p}$ and $C_0(\R)$ is the set of functions vanishing at infinity.  The norm on $C_{p,0}$ is $\|\phi\|_p = \sup_x |\phi(x)/\phi_p(x)|$.

We now state part of Proposition 2.1 in \cite{Dawson_Gorostiza} and Lemma 2.5 in \cite{Dawson_Gorostiza1}:  For $1/2<p<(1+\alpha)/2$ and $t\geq 0$,
\begin{itemize} 
\item The space $C_{p,0}(\R)$ and its dual $C'_{p,0}(\R)$ are intermediate in that $\S(\R)\subset C_{p,0}(\R)\subset L^2(\R)\subset C'_{p,0}(\R)\subset \S'(\R)$.
\item $\S(\R)$ is densely and continuously embedded in $C_{p,0}(\R)$.
\item $\Delta^{\alpha/2}, T_t: \S(\R) \rightarrow C_{p,0}(\R)$ are continuous linear mapings.
\item $t\mapsto T_t\phi$ is a continuous map in $C_{p,0}(\R)$ for each $\phi\in \S(\R)$.
 \end{itemize}
 
Let now $\{\Y_t: t\in [0,T]\}$ be a limit point of either $\{\Y^{n}_t: t\in [0,T]\}$ or $\{\Y^{n, \rightarrow}_t: t\in [0,T]\}$ in the uniform topology on $D([0,T], \S'(\R))$.
Hence, $\Y_\cdot$ is continuous and we can extend, for $\Phi_t\in \S(\R)\otimes \widehat{C}$, the object $\Y_t(\Phi_t)= \langle \Y_t, \Phi_t\rangle$ for each fixed $t\in [0,T]$ and also $\int_0^T \langle \Y_t, \Phi_t\rangle dt$.    
By density of $\S(\R)$ in $C_{p,0}(\R)$, $\S(\R)\otimes \widehat{C}$ is dense in $C_{p,0}(\R)\otimes \widehat{C}$ (cf. \cite{Treves}).

\begin{proposition} 
\label{nuclear}
The $L^2(\nu_\rho)$ limits
$$\lim_{k\uparrow\infty} \int_0^T \langle \Y_t, \Phi^k_t\rangle dt, \ \ \lim_{k\uparrow\infty}\int_0^T \langle N_t, \Phi_t\rangle dt, \ \ {\rm and \ \ } \lim_{k\uparrow\infty} \langle \Y_0, \Psi^k\rangle,$$
 where $\{\Phi^k_t\}\subset \S(\R)\otimes \widehat{C}$ approximates $\Phi_t\in C_{p,0}(\R)\otimes \widehat{C}$ and $\Psi^k\in \S(\R)$ approximates $\Psi\in C_{p,0}(\R)$, are well-defined and do not depend on the approximating sequences.  

As a consequence, all terms in \eqref{terms} are linear continuous random functionals on $\S(\R)\otimes \widehat{C}$ and therefore define unique $(\S(\R)\otimes \widehat{C})'$-valued random variables.
\end{proposition}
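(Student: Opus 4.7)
The plan is to show that each of the three $L^2(\nu_\rho)$ limits is Cauchy and independent of the approximating sequence, via second-moment estimates that control $\Y_t$, $\Y_0$, and $N_t$ when paired against $\S(\R)$ functions, and then use the continuous embedding $C_{p,0}(\R)\hookrightarrow L^2(\R)$ (valid since $2p>1$) to push these estimates from $\S(\R)$ to $C_{p,0}(\R)$.

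First I would record the basic second-moment bounds.  Since $\Y_t$ is a uniform limit point of $\Y^n_\cdot$ started from $\nu_\rho$, weak convergence together with stationarity and the CLT for $\Y^n_t(\phi)$ yield, for $\phi\in \S(\R)$,
$$
E_{\nu_\rho}[\Y_t(\phi)^2] \ \leq \ \sigma^2(\rho)\|\phi\|_{L^2(\R)}^2,
$$
uniformly in $t\in[0,T]$, and analogously for $\Y_0(\phi)$.  For the noise $N_t$, the covariance calculation recorded just after Proposition \ref{symmetric_prop} gives
$$
E_{\nu_\rho}\Big[\Big(\int_0^T N_s(\Phi_s)\,ds\Big)^2\Big] \ = \ \sigma^2(\rho)\int_0^T \|\nabla^{\alpha/2}\Phi_s\|^2_{L^2(\R\times\R)}\,ds.
$$
Combined with Cauchy-Schwarz, the first two bounds imply $E_{\nu_\rho}\bigl[\bigl(\int_0^T \Y_t(\Phi_t)dt\bigr)^2\bigr]\leq C T^2\sup_t\|\Phi_t\|^2_{L^2(\R)}$.

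Next I would transfer these bounds from $\S(\R)$ to $C_{p,0}(\R)$.  For $1/2<p<(1+\alpha)/2$ the weight $\phi_p(x)=(1+|x|^2)^{-p}$ lies in $L^2(\R)$, so $\|\phi\|_{L^2(\R)}\leq \|\phi\|_p\,\|\phi_p\|_{L^2(\R)}$ for every $\phi\in C_{p,0}(\R)$; hence $C_{p,0}(\R)$-convergence implies $L^2(\R)$-convergence.  Similarly, using the identity $\|\nabla^{\alpha/2}\phi\|^2_{L^2(\R\times\R)}=-2\int \phi\,\Delta^{\alpha/2}\phi\,dx$ together with the continuity $\Delta^{\alpha/2}:\S(\R)\to C_{p,0}(\R)$ from \cite{Dawson_Gorostiza}, the Dirichlet form of $\phi\in C_{p,0}(\R)$ is controlled in terms of $\|\phi\|_p$.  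Applied to $\Phi^k_t-\Phi^j_t$ for $\Phi^k\to \Phi$ in $C_{p,0}(\R)\otimes \widehat{C}$, and to $\Psi^k-\Psi^j$ for $\Psi^k\to \Psi$ in $C_{p,0}(\R)$, the bounds above show that the three approximating sequences are Cauchy in $L^2(\nu_\rho)$; independence from the chosen sequence follows by interleaving any two approximations and applying the same estimate to the difference.

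Finally, for the terms in \eqref{terms}, the listed mapping properties ensure $\Delta^{\alpha/2}\Phi_t$, $\int_0^T T_t\Phi_t\,dt$, and $\int_s^T T_{t-s}\partial_t\Phi_t\,dt$ all belong to $C_{p,0}(\R)$, using the $t$-continuity of $T_t$ in $C_{p,0}(\R)$ to make sense of the integrals as Bochner integrals in that Banach space.  The Cauchy limits constructed in the previous step then define the three random variables.  Linearity in $\Phi\in \S(\R)\otimes \widehat{C}$ is immediate, while continuity on $\S(\R)\otimes \widehat{C}$ follows by composing the continuous maps $\Phi\mapsto \Delta^{\alpha/2}\Phi$, $\Phi\mapsto \int_0^T T_t\Phi_t\,dt$, etc., into $C_{p,0}$ with the $L^2$-type bounds of the first step, so Chebyshev's inequality together with the nuclear structure of $\S(\R)\otimes \widehat{C}$ (cf.\ \cite{Treves}) produces $(\S(\R)\otimes \widehat{C})'$-valued random variables.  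The main technical obstacle is the Dirichlet-form bound $\|\nabla^{\alpha/2}\phi\|^2_{L^2(\R\times\R)}\leq C\|\phi\|_p^2$ for $\phi\in C_{p,0}(\R)$: the plain $L^2$-embedding is insufficient because of the singularity of $s(y)$ at $y=0$, and one must invoke the continuity $\Delta^{\alpha/2}:\S(\R)\to C_{p,0}(\R)$ established in \cite{Dawson_Gorostiza}, \cite{Dawson_Gorostiza1} to absorb that singularity against the decay of $\phi_p$.
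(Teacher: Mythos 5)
Your treatment of the $\Y_t$ and $\Y_0$ limits is essentially the paper's argument (which in turn follows the first part of the proof of Theorem 4.1 in \cite{Dawson_Gorostiza}): a second-moment Cauchy estimate using the stationary covariance $\sigma^2(\rho)\int_\R GH\,dx$, the embedding $\|\phi\|_{L^2(\R)}\leq\|\phi\|_p\,\|\phi_p\|_{L^2(\R)}$ valid for $p>1/2$, interleaving of two approximating sequences to get independence of the choice, and then the continuity of $\Delta^{\alpha/2}, T_t:\S(\R)\to C_{p,0}(\R)$ together with It\^o's regularization theorem for the nuclear space $\S(\R)\otimes\widehat{C}$ to produce the $(\S(\R)\otimes\widehat{C})'$-valued random variables. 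Up to that point the proposal matches the paper.

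The gap is in the noise term. The bound you yourself single out as the ``main technical obstacle,'' namely $\|\nabla^{\alpha/2}\phi\|^2_{L^2(\R\times\R)}\leq C\|\phi\|_p^2$ for $\phi\in C_{p,0}(\R)$, is false, and the mechanism you propose cannot produce it. The Dirichlet form $\int_\R\int_\R s(y)\big(\phi(x+y)-\phi(x)\big)^2\,dy\,dx$ with $s(y)\sim|y|^{-(1+\alpha)}$ is finite only for $\phi$ with fractional Sobolev regularity of order $\alpha/2$; the weighted sup norm $\|\phi\|_p$ gives no control on small-scale oscillations, so there are continuous, even compactly supported, elements of $C_{p,0}(\R)$ with infinite Dirichlet form. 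The continuity of $\Delta^{\alpha/2}:\S(\R)\to C_{p,0}(\R)$ does not rescue this: it is continuity with respect to the Schwartz seminorms on the domain, i.e., it bounds $\|\Delta^{\alpha/2}\phi\|_p$ by seminorms involving derivatives of $\phi$, not by $\|\phi\|_p$, so the identity $\|\nabla^{\alpha/2}\phi\|^2_{L^2(\R\times\R)}=-2\sigma^2(\rho)\int_\R\phi\,\Delta^{\alpha/2}\phi\,dx$ yields nothing of the claimed form. (Note also that $N_t$ in \eqref{terms} and \eqref{3.3} is the time-integrated noise, with covariance $\sigma^2(\rho)\min\{s,t\}\int_\R G\,\Delta^{\alpha/2}H\,dx$; your displayed second moment $\sigma^2(\rho)\int_0^T\|\nabla^{\alpha/2}\Phi_s\|^2_{L^2(\R\times\R)}\,ds$ is the It\^o-isometry formula for $\int_0^T\Phi_s\,dN_s$, a different object.) The paper does not attempt to extend $N$ to all of $C_{p,0}(\R)\otimes\widehat{C}$ by an estimate of this kind; it defers to \cite{Dawson_Gorostiza}, where the noise is only ever paired with the special elements $\int_s^T T_{t-s}\partial_t\Phi_t\,dt$ arising from $\Phi\in\S(\R)\otimes\widehat{C}$, whose regularity comes from the semigroup and the smoothness of $\Phi$ rather than from the $C_{p,0}$ norm. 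To repair your argument you would need either to restrict the class of test functions paired with $N$ in this way, or to work in a space that also controls an $H^{\alpha/2}$-type seminorm.
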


\begin{proof}
The argument is the same as the first part of the proof of Theorem 4.1 on pages 59-61 in \cite{Dawson_Gorostiza} as applied to the Gaussian process $\Y_t$ which has covariance $\langle \Y_t(G), \Y_t(H)\rangle = \sigma^2(\rho)\int_\R G(x)H(x)dx$ and Gaussian noise field $\N_t$ with covariance
$\langle N_s(G), N_t(H)\rangle = \sigma^2(\rho)\min\{s,t\}\int_\R G(x) \Delta^{\alpha/2}H(x)dx$ for $G,H\in \S(\R)$.  To give the main idea, we give the proof that $\int_0^T\langle \Y_t, \Delta^{\alpha/2}\Phi_t\rangle dt$ for $\Phi_t\in \S(\R)\otimes \widehat{C}$ defines an $(\S(\R)\otimes \widehat{C})'$-valued random variable.  See \cite{Dawson_Gorostiza} for more details and arguments for the other terms.

We first show $\lim_{k\uparrow\infty}\int_0^T \langle \Y_t, \Phi^k_t\rangle dt$ is well defined.  
Write
\begin{eqnarray}
\label{approximation}
&&\E_{\nu_\rho} \Big| \int_0^T \langle \Y_t, \Phi^k_t\rangle dt - \int_0^T \langle \Y_t, \Phi^\ell_t \rangle dt \Big|^2  
\ = \ \E_{\nu_\rho} \Big| \int_0^T \langle \Y_t, \Phi^k_t - \Phi^\ell_t\rangle dt \Big|^2 \\
&&\ \ \ \ \ \ \ \ \ \leq \ T^2\sigma^2(\rho)\sup_{0\leq t\leq T}\int_\R \big(\Phi^k_t - \Phi^\ell_t)^2(x) dx\nonumber\\
&&\ \ \ \ \ \ \ \ \ \leq \ T^2 \sigma^2(\rho)\sup_{0\leq t\leq T}\|\Phi^k_t - \Phi^\ell_t\|^2_p \int_\R (1+ |x|^2)^{2p}dx.\nonumber
\end{eqnarray}
Hence, $\big\{\int_0^T \langle \Y_t, \Phi^k_t\rangle dt\big\}$ is a $L^2$-Cauchy sequence.  The limit does not depend on the approximation taken, and is linear and continuous in $\Phi_t\in C_{p,0}(\R)\otimes \widehat{C}$.  

Therefore, as $\Delta^{\alpha/2}:\S(\R)\rightarrow C_{p,0}(\R)$ is continuous, the maps $\Phi_t\in \S(\R)\otimes \widehat{C} \mapsto \Delta^{\alpha/2}\Phi_t\in C_{p,0}(\R)\otimes \widehat{C}$ and $\Phi_t\in \S(\R)\otimes \widehat{C} \mapsto \int_0^T \langle \Y_t, \Delta^{\alpha/2}\Phi_t\rangle dt$ are linear and continuous, the last being a linear continuous random functional.  Since $\S(\R)\otimes \widehat{C}$ is a nuclear space, by Ito's regularization theorem (cf. Lemma 2.4 in \cite{Dawson_Gorostiza1}), there is a unique $(\S(\R)\otimes \widehat{C})'$-valued random variable corresponding to the functional.
\end{proof}

\subsection{Identification}
\label{identification}
We now identify the structure of the limit points with respect to Theorem \ref{secondorder_thm}.
 Let $\mu^n$ be the distribution of
$$\left( \Y_t^{n,\rightarrow}, \M^{n,\rightarrow}_t, \I^{n, \rightarrow}_t, \B^{n, \rightarrow}_t, \K^{n, \rightarrow}_t,  \langle \M^{n, \rightarrow}_t\rangle : t\in[0,T]\right).$$
Suppose $n'$ is a subsequence where $\mu^{n'}$ converges to a limit point $\mu$.  Let also $\Y_t$, $\M_t$, $\I_t$, $\B_t$, $\K_t$ and $\D_t$ be the respective limits in distribution of the components.
Since tightness (Proposition \ref{stationary_tightness}) is shown in the uniform topology on $D([0,T], \S'(\R))$, we have that $\Y_t$, $\M_t$, $\I_t$, $\B_t$, $\K_t$ and $\D_t$ have a.s. continuous paths.

Let $G_\varepsilon:\bb R \rightarrow [0,\infty)$ be a smooth compactly supported function for $0<\varepsilon\leq 1$ which approximates $\iota_\varepsilon(z) = \varepsilon^{-1}1_{[-1,1]}(z\varepsilon^{-1})$ as mentioned before Theorem \ref{secondorder_thm}: $\|G_\varepsilon\|^2_{L^2(\R)}\leq 2\|\iota_\varepsilon\|^2_{L^2(\R)}=\varepsilon^{-1}$ and $\lim_{\varepsilon\downarrow 0}\varepsilon^{-1/2}\|G_\varepsilon - \iota_\varepsilon\|_{L^2(\R)}=0$.
For $\alpha>0$, define
$$\A^{n,\varepsilon,\rightarrow}_{s,t}(H) \ := \ \int_s^t \frac{1}{n^{1/\alpha}}\sum_{x\in \Z} (\nabla^n_x H)\big[\tau_x \Y^{n, \rightarrow}_u(G_\varepsilon)\big]^2du.$$
For fixed $0<\varepsilon\leq 1$, the transformation $\pi_\cdot \mapsto \int_s^t du \int dx\big(\nabla H(x)\big)\big\{\pi_u(\tau_{-x} G_\varepsilon)\big\}^2$ is continuous in the uniform topology on $D([0,T];\S'(\R))$.  Then, in distribution, 
\begin{eqnarray*}
\lim_{n'\uparrow\infty}\A^{n',\varepsilon, \rightarrow}_{s,t}(H) & =& \int_s^t du\int dx\big(\nabla H(x)\big)\big\{\Y_u(\tau_{-x} G_\varepsilon)\big\}^2  \ =: \ \A^{\varepsilon}_{s,t}(H).\end{eqnarray*}

\begin{proposition}
\label{stat_lemma1} Consider the systems in Theorem \ref{secondorder_thm}.  Recall the initial distribution is $\nu_\rho$ and $t\in [0,T]$.

(1) When $3/2\leq \alpha<2$ and $\gamma = 1 -3/2\alpha$, there is a constant $C=C(\beta,\rho, g)$ such that
\begin{eqnarray*}
&&\lim_{n\uparrow\infty}\E_{\nu_\rho} \Big[ \Big|\B^{n, \rightarrow}_t(H) - \beta\tilde{g}''(\rho)\A^{n,\varepsilon, \rightarrow}_{0,t}(H)\Big|^2\Big] \\
&&\ \ \ \ \ \  \leq \ Ct \Big(\varepsilon^{\alpha-1} + \varepsilon^{-1}\|G_\varepsilon - \iota_\varepsilon\|^2_{L^2(\R)}\Big)  \Big[\|\nabla H\|^2_{L^2(\R)} + \|\nabla H\|^2_{L^1(\R)}\Big].
\end{eqnarray*}
Then, in $L^2(\P_{\nu_\rho})$,
$A^\varepsilon_{0,t}(H)$ is a Cauchy $\varepsilon$-sequence.  Hence,
$$\beta \tilde{g}''(\rho) \A_{0,t}(H) \ := \ \lim_{\varepsilon\downarrow 0} \beta \tilde{g}''(\rho)\A^\varepsilon_{0,t}(H)
 \ = \ \B_t(H).$$
Also, $\A_{s,t}(H) \stackrel{d}{=} \A_{0,t-s}(H)$ does not depend on the specific family $\{G_\varepsilon\}$.

(2) On the other hand, when $0<\alpha<3/2$, we have $\lim_{n\uparrow\infty}\B^{n,\rightarrow}_t(H) = \B_t(H) = 0$ in $L^2(\P_{\nu_\rho})$.

(3) When $0<\alpha<2$,
\begin{eqnarray*}
&&\lim_{n\uparrow\infty} \E_{\nu_\rho}\Big[\Big |\I^{n, \rightarrow}_t(H) - \tilde{g}'(\rho)\int_0^t \Y^{n, \rightarrow}_s(\Delta^{\alpha/2} H)ds\Big|^2\Big] \ = \ 0\\
&&\lim_{n\uparrow\infty} \E_{\nu_\rho}\Big[\Big| \langle \M^{n, \rightarrow}_t(H)\rangle - \tilde{g}(\rho)t\|\nabla^{\alpha/2} H\|^2_{L^2(\R\times\R)}\Big|^2\Big] \ = \ 0\\
&&\lim_{n\uparrow\infty} \E_{\nu_\rho}\Big[\Big| \K^{n, \rightarrow}_t(H) \Big |^2\Big] \ = \ 0.
\end{eqnarray*}
Then, in $L^2(\P_{\nu_\rho})$, $\K_t(H)= 0$,  $D_t(H)  =  \tilde{g}(\rho)t\|\nabla^{\alpha/2} H\|^2_{L^2(\R\times \R)}$,
and $$\I_t(H) \ = \ \tilde{g}'(\rho)\int_0^t\Y_s(\Delta^{\alpha/2} H)ds.$$
Moreover,
$\M_t(H)$ is a continuous martingale with quadratic variation $D_t(H)$, and hence by Levy's theorem $\M_t$ is a version of the noise in \eqref{gen_OU_DG}. 
\end{proposition}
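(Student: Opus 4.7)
The plan is to handle all three parts via systematic application of the Boltzmann-Gibbs principle in Proposition~\ref{gbg_L2}, replacing local rate functions by functionals of local densities and then identifying these with the fluctuation field.

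For part (1), set $V(\eta) = g(\eta(0)) - \tilde{g}(\rho) - \tilde{g}'(\rho)(\eta(0)-\rho)$, so $\tilde{V}(\rho) = \tilde{V}'(\rho) = 0$ and $\tilde{V}''(\rho) = \tilde{g}''(\rho)$, and write $\B^{n,\rightarrow}_t(H) = 2\beta n^{1-\gamma-3/2\alpha}\int_0^t\sum_x (\nabla^n_x H_{\gamma,s})\tau_x V(\eta^n_s)\,ds$. Apply the sharp (first) Boltzmann-Gibbs principle: by translation-invariance of $\nu_\rho$ one may replace $H_{\gamma,s}$ by $H$ in the $L^2$ bound, and at $\gamma = 1-3/2\alpha$ the prefactor $2\beta n^{1-\gamma-3/2\alpha}$ collapses to a constant. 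With $\ell = \lfloor\varepsilon n^{1/\alpha}\rfloor$, the Boltzmann-Gibbs error simplifies to $Ct\varepsilon^{\alpha-1}\|\nabla H\|^2_{L^2} + Ct^2\varepsilon^{-3}n^{-1/\alpha}\|\nabla H\|^2_{L^1}$, the second vanishing as $n\uparrow\infty$. The key identification, exact for the indicator $\iota_\varepsilon$ with $\ell = \lfloor\varepsilon n^{1/\alpha}\rfloor$ and up to the deterministic moving-frame translation,
\[
(\eta^n_s)^{(\ell)}(x) - \rho \ =\ n^{-1/2\alpha}\,\Y^{n,\rightarrow}_s(\tau_{-x/n^{1/\alpha}}\iota_\varepsilon),
\]
shows that the Boltzmann-Gibbs target equals $\beta\tilde{g}''(\rho)\A^{n,\varepsilon,\rightarrow}_{0,t}(H)$ with $\iota_\varepsilon$ in place of $G_\varepsilon$; the centering $\sigma^2_\ell(\rho)/(2\ell+1)$ drops out since $\sum_x\nabla^n_x H = 0$. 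Substituting $G_\varepsilon$ for $\iota_\varepsilon$ is controlled through the stationary identity $\E_{\nu_\rho}[\Y^{n,\rightarrow}_s(F)^2] = \sigma^2(\rho)\|F\|^2_{L^2} + o(1)$ and $\|G_\varepsilon\|^2_{L^2}\le\varepsilon^{-1}$, producing the stated $\varepsilon^{-1}\|G_\varepsilon - \iota_\varepsilon\|^2_{L^2}$ term. The Cauchy property of $\A^\varepsilon_{0,t}(H)$ then follows by comparing the bounds for $\varepsilon_1,\varepsilon_2$ through the common $\B^{n,\rightarrow}_t(H)$: one obtains $\limsup_n\E_{\nu_\rho}[|\A^{n,\varepsilon_1,\rightarrow}_{0,t}(H) - \A^{n,\varepsilon_2,\rightarrow}_{0,t}(H)|^2] = O(\varepsilon_1^{\alpha-1} + \varepsilon_2^{\alpha-1}) \to 0$. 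The limit defines $\A_{0,t}(H)$, independence of $\{G_\varepsilon\}$ follows from continuity in the approximant in $L^2(\R)$, and $\A_{s,t}\stackrel{d}{=}\A_{0,t-s}$ from stationarity of $\nu_\rho$.

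Part (2) is immediate from \eqref{BG_tightness}: for $0<\alpha<3/2$ and any $\gamma\ge 0$, one has $2\gamma + 3/\alpha - 2 \ge 3/\alpha - 2 > 0$, so the prefactor $n^{-(2\gamma+3/\alpha-2)}$ decays, and choosing $\ell = n^{1/\alpha}$ yields $\E_{\nu_\rho}[(\B^{n,\rightarrow}_t(H))^2]\to 0$. For part (3), the convergence of $\I^{n,\rightarrow}_t$ uses the weaker Boltzmann-Gibbs principle applied to $f(\eta) = g(\eta(0)) - \tilde{g}(\rho)$ (for which $\tilde{f}'(\rho) = \tilde{g}'(\rho)$) with $h(x,s) = (2n^{1/2\alpha})^{-1}n^{-1/\alpha}\sum_y s(y/n^{1/\alpha})\Delta^n_{x,y}H_{\gamma,s}$, a Riemann approximation to $n^{-1/2\alpha}\Delta^{\alpha/2}H_{\gamma,s}(x/n^{1/\alpha})$; picking $\ell = n^a$ with $a\in(1/(2\alpha), 1/\alpha)$ makes both Boltzmann-Gibbs error terms vanish, and swapping summations then absorbs the block-average into the fluctuation field tested against $\Delta^{\alpha/2}H$, which is legitimate by Proposition~\ref{nuclear} since $\Delta^{\alpha/2}H\in C_{p,0}(\R)\setminus\S(\R)$. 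For the quadratic variation, a Riemann-sum computation using \eqref{s_scaling} shows the mean of the first bracket piece converges to $\tilde{g}(\rho)t\|\nabla^{\alpha/2}H\|^2_{L^2(\R\times\R)}$, while its variance vanishes by another Boltzmann-Gibbs application to $g(\eta) - \tilde{g}(\rho)$; the second bracket piece is $O(n^{-3/2\alpha})$ by stationarity and telescoping. The $\K^{n,\rightarrow}_t(H)\to 0$ claim follows since the Taylor expansions for $\kappa^{n,1}$ and $\kappa^{n,2}$ supply an extra $n^{-1/\alpha}$ gain that extinguishes the Boltzmann-Gibbs bound. Finally, $\M_t(H)$ is a uniform limit of continuous martingales with converging quadratic variations, hence itself a continuous martingale with the stated bracket, and by L\'evy's theorem a Brownian motion.

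The main obstacle lies in part (1), where three approximations---the Boltzmann-Gibbs replacement, the smooth-versus-indicator kernel mismatch, and the Riemann-sum identification of the block average as $\Y^{n,\rightarrow}_s(\tau_{-x/n^{1/\alpha}}\iota_\varepsilon)$---must all be made quantitative and balanced against each other at the critical scaling $\gamma = 1 - 3/2\alpha$. The decay $\varepsilon^{\alpha-1}\to 0$ as $\varepsilon\downarrow 0$ requires $\alpha>1$, which combined with the admissibility constraint $\gamma\ge 0$ (forcing $\alpha\ge 3/2$) precisely isolates the Burgers regime identified in the proposition.
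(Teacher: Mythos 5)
Your proposal follows essentially the same route as the paper: the Boltzmann--Gibbs principle of Proposition \ref{gbg_L2} applied with $\ell=\varepsilon n^{1/\alpha}$, identification of the block average with the field tested against $\iota_\varepsilon$, control of the $G_\varepsilon$ versus $\iota_\varepsilon$ substitution via a Cauchy--Schwarz $L^2$ bound yielding the $\varepsilon^{-1}\|G_\varepsilon-\iota_\varepsilon\|^2_{L^2(\R)}$ term, the vanishing of $\B^{n,\rightarrow}_t$ for $0<\alpha<3/2$ from the same estimate, and the approximation of $\Delta^{\alpha/2}H$ in $C_{p,0}(\R)$ for the $\I$ term; your scalings and error exponents match the paper's. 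The one slip is describing $\M_t(H)$ as a uniform limit of \emph{continuous} martingales --- the prelimit martingales are c\`adl\`ag with jumps, and the paper closes this point by bounding the expected supremum of the jumps and invoking Corollary VI.6.30 of \cite{JS} to pass the quadratic variation to the limit --- but this does not affect the substance of the argument.
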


\begin{proof}   We first verify (1) and (3).  Suppose the limit display for $\B^{n,\rightarrow}_t(H)$ holds.  By a Fatou's lemma, we conclude
$\E_{\nu_\rho}\big[\big|\B_t(H) - \beta\tilde{g}''(\rho))\A^\varepsilon_{0,t}(H)\big|^2\big]\ \leq \ Ct\big[\varepsilon^{\alpha -1} + \varepsilon^{-1}\|G_\varepsilon - \iota_\varepsilon\|^2_{L^2(\R)}\big]$.
  Therefore, $\A^\varepsilon_{0,t}(H)$, as a sequence in $\varepsilon$, is Cauchy in $L^2(\P_{\nu_\rho})$.  

The arguments for $\K^{n,\rightarrow}_t(H)$ and $\langle \M^{n,\rightarrow}_t\rangle$ and identification of limits $\K_t$ and $\D_t(H)$, noting their forms and that the process starts from product measure $\nu_\rho$, follow straightforwardly.

  Assuming the limit with respect to $\I_t^{n, \rightarrow}(H)$, we now identify $\I_t(H)$ by approximating $\Delta^{\alpha/2}H\in C_{p,0}(\R)$ by functions $\Phi^k\in\S(\R)$ with respect to the norm on $C_{p,0}(\R)$:  Approximate, as in \eqref{approximation}, 
\begin{eqnarray*}
\E_{\nu_\rho} \Big|\int_0^t \Y_s^{n,\rightarrow}(\Delta^{\alpha/2}H)ds - \int_0^t\Y^{n,\rightarrow}_s(\Phi^k)ds\Big |^2 \ \leq \ Ct^2  \|\Delta^{\alpha/2}H - \Phi^k\|_p^2 \ \ \ {\rm and \ }\end{eqnarray*}
$$\E_{\nu_\rho}\Big|\int_0^t \Y_s(\Delta^{\alpha/2}H)ds -\int_0^t \Y_s(\Phi^k)ds\Big|^2 \ \leq \ Ct^2 \|\Delta^{\alpha/2}H - \Phi^k\|^2_p.$$
 Passing now to the limit as $n'\uparrow\infty$, one obtains $\I_t(H) = \tilde{g}'(\rho)\int_0^t\Y_s(\Delta^{\alpha/2}H)ds$.

We now argue the limit for $\B_t^{n,\rightarrow}$ assumed earlier, and remark the limit for $\I^{n,\rightarrow}_t$ is analogous since $\Delta^{\alpha/2}G\in C_{p,0}(\R)$ is uniformly continuous.  Note, for $\ell=\varepsilon n^{1/\alpha}$, to move the shift by $n^{-1/\alpha}\lfloor a\tilde{g}'(\rho)sn/n^\gamma\rfloor$ in $\nabla^n_x H_{\gamma,s}$ (cf. \eqref{shifted_H}) to $\tau_x\Y^{n, \rightarrow}_s(\iota_\varepsilon)$, write
\begin{eqnarray*}
&&\sum_{x\in \Z}(\nabla^n_x H_{\gamma,s})\Big(\big(\eta^n_s\big)^{(\ell)}(x) - \rho\Big)^2\\
&&\ \ \ \ \ \ \ = \ 
 \sum_{x\in \Z}(\nabla^n_x H_{\gamma,s})\Big(\frac{1}{2n^{1/\alpha}\varepsilon +1}\sum_{|z|\leq n^{1/\alpha}\varepsilon}(\eta^n_s(z+x) - \rho)\Big)^2\\
&&\ \ \ \ \ \ \  = \ \frac{1+O(n^{-1/\alpha})}{n^{1/\alpha}}\sum_{x\in \Z} (\nabla^n_x H)\big[\tau_x \Y^{n, \rightarrow}_s(\iota_\varepsilon)\big]^2.
\end{eqnarray*}

Then, with $\ell = \varepsilon n^{1/\alpha}$, since $\gamma = 1-3/2\alpha$, by Proposition \ref{gbg_L2}, 
we have
\begin{eqnarray*}
&&\lim_{n\uparrow\infty}\E_{\nu_\rho}\Big[\Big(\B^{n, \rightarrow}_t(H) - \beta\tilde{g}''(\rho)\int_0^t \frac{1}{n^{1/\alpha}} \sum_{x\in \Z} (\nabla^n_x H)\tau_x\Y^{n, \rightarrow}_s(\iota_\varepsilon)^2 ds\Big)^2\Big]\\
&&\  = \ \lim_{n\uparrow\infty}\E_{\nu_\rho}\Big[\Big( \B_t^{n, \rightarrow}(H) \\
&&\ \ \ \ \ \ \ \ \ \ \ - \beta\tilde{g}''(\rho)\int_0^t \frac{1}{n^{1/\alpha}} \sum_{x\in \Z} (\nabla^n_x H)\tau_x\Big\{\Y^{n, \rightarrow}_s(\iota_\varepsilon)^2 - \frac{\sigma^2_\ell(\rho)}{2\varepsilon}\Big\}ds\Big)^2\Big]\\
&&\  \leq \ \lim_{n\uparrow\infty}C(\beta,\rho,g,T)\Big( \varepsilon^{\alpha-1} + \frac{1}{\varepsilon^{3} n^{1/\alpha}}\Big)\\
&&\ \ \ \ \ \ \ \ \ \ \ \ \ \ \ \times \Big[\Big(\frac{1}{n^{1/\alpha}}\sum_{x\in \Z} \big(\nabla^n_x H\big)^2\Big) + \Big(\frac{1}{n^{1/\alpha}}\sum_{x\in \Z}\big|\nabla^n_x H\big|\Big)^2\Big].
\end{eqnarray*}
Here, as the sum of $\nabla^n_x H_{\gamma,s}$ on $x$ vanishes, the centering constant $(2\varepsilon)^{-1}\sigma^2_\ell(\rho)$ was put in the second line.  

Now, 
$$\Y^{n, \rightarrow}_s(\iota_\varepsilon)^2 - \Y^{n, \rightarrow}_x(G_\varepsilon)^2 \ =\  \big[\Y^{n, \rightarrow}_s(\iota_\varepsilon) - \Y^{n, \rightarrow}_s(G_\varepsilon)\big] \cdot \big[\Y^{n, \rightarrow}_s(\iota_\varepsilon) + \Y^{n, \rightarrow}_s(G_\varepsilon)\big],$$
 and, by Schwarz inequality, 
\begin{eqnarray*}
&&\lim_{n\uparrow\infty} \E_{\nu_\rho} \Big[\Big(\int_0^t\frac{1}{n^{1/\alpha}} \sum_{x\in \Z} (\nabla^n_x H)\tau_x\Y^{n, \rightarrow}_s(\iota_\varepsilon)^2 ds - \A^{n,\gamma,\varepsilon}_{0,t}(H)\Big)^2\Big]\\
&&\ \leq \ C(\rho)\varepsilon^{-1}\|G_\varepsilon-\iota_\varepsilon\|^2_{L^2(\R)}t^2\Big(\frac{1}{n^{1/\alpha}}\sum_{x\in \Z}\big|\nabla^n_x H\big|\Big)^2.
\end{eqnarray*}
These estimates with the inequality $(a+b)^2 \leq 2a^2 + 2b^2$ finish the proof of the $\B^{n, \rightarrow}_t$ limit.

We now address the martingale convergence. 
By the identification given before, any limit point of the quadratic variation sequence equals $\mc D_t(H) = \tilde{g}(\rho)t\|\nabla^{\alpha/2} H\|^2_{L^2(\R\times \R)}$.  Then, the limit of martingales $\M_t(H)$ with respect to the uniform topology is a continuous martingale.  Also, by the triangle inequality, Doob's inequality and the quadratic variation bound \eqref{quad_var_bound}, 
\begin{eqnarray*}
&&\sup_n \E_{\nu_\rho}\Big[\sup_{0\leq s\leq t} |\M^{n, \rightarrow}_s(H) - \M^{n, \rightarrow}_{s^-}(H)|\Big]
\ \leq \ 2\sup_n \E_{\nu_\rho} \Big[ \sup_{u\in [0,t]}|\M^{n, \rightarrow}_u(H)|^2\Big]^{1/2}\\
&& \ \ \ \ \ \ \ \ \ \ \ \ \ \ \  \leq \ 2\sup_n \E_{\nu_\rho}\Big[\langle M^{n, \rightarrow}_t(H)\rangle\Big]^{1/2} \ \leq \ C(\beta,T)\|g\|_{L^1(\nu_\rho)}\|\nabla^{\alpha/2} H\|^2_{L^2(\R\times \R)}.
\end{eqnarray*}
Then, by Corollary VI.6.30 of
\cite{JS}, $(\M_t^{n', \rightarrow}(H), \langle \M^{n', \rightarrow}_t(H)\rangle)$ converges subsequentially in distribution to $(\M_t(H), \langle \M_t(H)\rangle)$.  Since, also $\langle \M^{n', \rightarrow}_t(H)\rangle$ converges on a subsequence in distribution to $\D_t(H)=\tilde{g}(\rho)t\|\nabla^{\alpha/2} H\|^2_{L^2(\R\times \R)}$, we have $\langle \M_t(H)\rangle = \tilde{g}(\rho)t\|\nabla^{\alpha/2} H\|^2_{L^2(\R\times \R)}$.
Hence, by Levy's theorem, $\M_t$ is a version of the noise $N_t$ desired.
This finishes the proof of (1) and (3).

 To show (2), by a Fatou's lemma,
one need only show $\lim_{n\uparrow\infty}\B^{n,\gamma}_t(H)=0$ in $L^2(\P_{\nu_\rho})$.    When $0<\alpha<3/2$, by Proposition \ref{gbg_L2} and the argument for part (1) with $\varepsilon =1$, $n^{\gamma  -1 + 3/2\alpha}\B^{n,\rightarrow}_t(H)$ differs from $\beta \tilde{g}''(\rho)\A^{n,1,\rightarrow}_{0,t}(H)$ by an error of $O(1)$.  Since $\A^{n,1,\rightarrow}_{0,t}(H)$ is bounded in $L^2(\nu_\rho)$, $\B^{n,\rightarrow}_t(H)$ must vanish in $L^2(\nu_\rho)$.
\end{proof}

\begin{proposition}
\label{limit_eqn}
Consider the processes in Theorem \ref{secondorder_thm} when $0<\alpha<3/2$.  We have that limit points $\{\Y_t: t\in [0,T]\}$ satisfy equation \eqref{3.3} for $\Phi_t\in \S(\R)\otimes \widehat{C}$.
\end{proposition}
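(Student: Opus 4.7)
The plan is to bootstrap the pointwise-in-$H$ martingale identity of Proposition \ref{stat_lemma1} into the weak space-time form \eqref{3.3}. When $0<\alpha<3/2$, parts (2) and (3) of that proposition yield, for every fixed $H\in\S(\R)$, the continuous semimartingale decomposition
\[
\Y_t(H)-\Y_0(H)-\tilde{g}'(\rho)\int_0^t \Y_s(\Delta^{\alpha/2}H)\,ds \;=\; \M_t(H),
\]
where $\M_t(H)$ is a continuous martingale with quadratic variation $\tilde{g}(\rho)t\|\nabla^{\alpha/2}H\|^2_{L^2(\R\times\R)}$. By L\'evy's theorem, $\M_\cdot$ may be identified with a version of the fractional noise $N_\cdot$ appearing in \eqref{3.3}, and in particular the stochastic integrals $\int_0^T \phi(t)\,d\M_t(H)$ match the corresponding noise integrals on the right-hand side of \eqref{3.3}.

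I would first establish \eqref{3.3} for tensor products $\Phi_t(x)=\phi(t)H(x)$ with $\phi\in\widehat{C}$ and $H\in\S(\R)$. The trick is to multiply the displayed identity by $\partial_t\phi(t)$ and integrate on $[0,T]$. Fubini (justified by boundedness of $\partial_t\phi$ together with continuity of $s\mapsto\Y_s(\Delta^{\alpha/2}H)$) exchanges the order in the iterated drift integral, yielding $-\int_0^T\phi(s)\Y_s(\Delta^{\alpha/2}H)\,ds$; deterministic integration by parts on the $\Y_0$-term contributes $-\Y_0(H)\phi(0)$; and stochastic integration by parts for a deterministic integrand against a continuous martingale converts $\int_0^T\partial_t\phi(t)\M_t(H)\,dt$ into $-\int_0^T\phi(t)\,d\M_t(H)$. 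All boundary contributions vanish: the support condition $\supp\phi\subset(-\delta_0,T)$ forces $\phi(T)=0$, and $\M_0\equiv 0$. Rearranging and invoking the L\'evy identification of $\M_\cdot$ with $N_\cdot$ produces \eqref{3.3} on tensor products; linearity then extends it to the algebraic tensor span $\S(\R)\odot\widehat{C}$.

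The remaining step is to extend the identity from this dense algebraic span to all of $\S(\R)\otimes\widehat{C}$, which is where the principal technical obstacle lies. Because $\Delta^{\alpha/2}$ does not preserve $\S(\R)$, the pairing $\int_0^T\langle\Y_t,\Delta^{\alpha/2}\Phi_t\rangle dt$ cannot be read off directly from $\Y_\cdot$ viewed as an $\S'(\R)$-valued process; it must be understood through the intermediate Banach space $C_{p,0}(\R)$ and the $L^2(\nu_\rho)$-limit construction of Proposition \ref{nuclear}. Granting that proposition, each of the three functionals $\Phi\mapsto\int_0^T\langle\Y_t,\partial_t\Phi_t+\Delta^{\alpha/2}\Phi_t\rangle dt$, $\Phi\mapsto\langle\Y_0,\Phi_0\rangle$, and $\Phi\mapsto\int_0^T\langle N_t,\partial_t\Phi_t\rangle dt$ is linear and continuous on $\S(\R)\otimes\widehat{C}$ in the nuclear topology, so density of $\S(\R)\odot\widehat{C}$ together with the already-established identity on this dense subspace finishes the extension and gives \eqref{3.3} throughout $\S(\R)\otimes\widehat{C}$.
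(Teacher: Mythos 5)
Your proposal is correct and takes essentially the same route as the paper: test against $\Phi_t(x)=\phi(t)H(x)$, integrate by parts in time using $\phi(T)=0$, identify the martingale part with the noise $N_t$ via Proposition \ref{stat_lemma1}, and extend to all of $\S(\R)\otimes\widehat{C}$ through the $C_{p,0}(\R)$ continuity/density machinery of Proposition \ref{nuclear}. The only (harmless) difference is one of ordering: the paper multiplies the pre-limit decomposition \eqref{mart_decomposition} by $f'(t)$, integrates, and then passes $n'\uparrow\infty$ while approximating $\Delta^{\alpha/2}\Phi$ in the $\|\cdot\|_p$ norm, whereas you perform the same manipulation directly on the limiting semimartingale identity that Proposition \ref{stat_lemma1} already supplies.
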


\begin{proof}
We show that \eqref{3.3} holds for a function of the form $\Phi_t(x) = \Phi(x)f(t)$ for $\Phi\in \S(\R)$ and $f\in \widehat{C}$.  Then, noting the covariance $\E_{\nu_\rho} [\Y_t(G)\Y_t(H)] = \sigma^2(\rho)\int_\R GH dx$ and $\E_{\nu_\rho}[N_t(G)N_t(H)] = \sigma^2(\rho)\int_\R G\Delta^{\alpha/2}H dx$, the  usual approximation procedure can be employed to verify \eqref{3.3} for $\Phi_t\in \S(\R)\otimes \widehat{C}$.

Multiplying the decomposition \eqref{mart_decomposition} by $f'(t)$ and then integrating over $t\in [0,T]$, we obtain
\begin{eqnarray*}
\int_0^Tf'(t)\Y^{n, \rightarrow}_t(\Phi) dt 
& = & \int_0^T f'(t)dt \Y^{n, \rightarrow}_0(\Phi) + \int_0^T f'(t)\int_0^t \Y^{n, \rightarrow}_s(\Delta^{\alpha/2}\Phi)dsdt \\
&&\ \ \ \ \ \ + \int_0^T f'(t)\M^{n, \rightarrow}_t (\Phi)dt + \mathcal{E}(n)\end{eqnarray*}
where $\mathcal{E}(n)$ incorporates $\B^{n,\rightarrow}_t(\Phi)$ and other errors.
  By part (2) of Proposition \ref{stat_lemma1}, we conclude $\lim_{n\uparrow\infty}\E_{\nu_\rho}[\mathcal{E}^2(n)] =0$.

Since $f(T)=0$, $\int_0^T f'(t)dt \Y^{n, \rightarrow}_0(\Phi) = -f(0)\Y^{n, \rightarrow}_0(\Phi)$ and 
$$\int_0^T f'(t)\int_0^t\Y^{n, \rightarrow}_s(\Delta^{\alpha/2}\Phi)dsdt \ = \ -\int_0^T f(t)\Y^{n, \rightarrow}_t(\Delta^{\alpha/2}\Phi)dt.$$
Then, passing along the subsequence $n=n'$, and approximating $\Delta^{\alpha/2}\Phi$ by functions in $\S(\R)$ in the $\|\cdot\|_p$ norm, we obtain
$$\int_0^Tf'(t)\Y_t(\Phi)dt \ = \ -f(0)\Y_0(\Phi) - \int_0^T f(t)\Y_t(\Delta^{\alpha/2}\Phi)dt + \int_0^T f'(t) \M_t(\Phi_t)dt.$$
Recall $\M_t$ is a version of the noise $N_t$.  \end{proof}

\noindent
 {\bf Proof of Theorem \ref{secondorder_thm}}  Let $H\in \S(\R)$ and $t\in [0,T]$.  When $3/2\leq \alpha<2$ and $\gamma = 1-3/2\alpha$, by the decomposition \eqref{mart_decomposition}, Proposition \ref{stat_lemma1}, and tightness of the constituent processes $\Y_t^{n, \rightarrow}$, $\M^{n, \rightarrow}_t$, $\I^{n, \rightarrow}_t$, $\B^{n, \rightarrow}_t$, $\K^{n, \rightarrow}_t$ and $\langle \M^{n, \rightarrow}_t\rangle$ in the uniform topology (Proposition \ref{stationary_tightness}), any limit point of 
$$\left (\Y^{n, \rightarrow}_t, \M^{n, \rightarrow}_t, \I^{n, \rightarrow}_t, \B^{n, \rightarrow}_t, \K^{n, \rightarrow}_t, \langle \M^{n, \rightarrow}_t\rangle : t\in [0,T] \right )$$
 satisfies
$$\M_t(H) \ =\ \Y_t(H) - \Y_0(H) - \tilde{g}'(\rho)\int_0^t\Y_s(\Delta^{\alpha/2} H)ds -\beta\tilde{g}''(\rho)\A_{0,t}(H).$$
 Moreover, by Proposition \ref{stat_lemma1}, $\Y_t$ is a fractional $L^2$ energy solution of \eqref{gen_OU_sec}.

Finally, when $0<\alpha<3/2$, by Proposition \ref{limit_eqn}, all limit points $\Y_\cdot$ satisfy equation \eqref{3.3}.  Hence, by uniqueness of the `generalized' solution to this equation, all limit point converge to this solution which solves \eqref{gen_OU_DG}.
\qed

\medskip

\noindent{\bf Acknowledgments.}
We thank Tadahisa Funaki and Jeremy Quastel for discussions on the fractional KPZ Burgers equation \eqref{fractional KPZ Burgers}.

This work was partially supported by ARO grant 65389MA.

\end{document}